\newtheorem{thm}{Theorem}[section]
\newtheorem{lemma}[thm]{Lemma}
\newtheorem{proposition}[thm]{Proposition}
\newtheorem{corollary}[thm]{Corollary}
\newtheorem{conjecture}[thm]{Conjecture}
\theoremstyle{definition}
\newtheorem{dfn}[thm]{Definition}
\newtheorem{remark}[thm]{Remark}
\newtheorem{example}[thm]{Example}
\newtheorem*{notation}{Notation}
\newcommand{\coker}[1]{\mathrm{coker}\ #1}
\newcommand{\Hom}{\mathrm{Hom}}
\newcommand{\Fil}{\mathrm{Fil}}
\newcommand{\id}{\mathrm{id}}
\newcommand{\colim}{\mathrm{colim}}
\newcommand{\s}{\!}
\begin{document}

\title{New cases of Dwork's conjecture on asymptotic behaviors of solutions of $p$-adic differential equations without solvability}
\author{Shun Ohkubo}
\date{\today}
\maketitle

\begin{abstract}
One of the phenomena peculiar in the theory of $p$-adic differential equations is that solutions $f$ of $p$-adic differential equations defined on open discs may satisfy growth conditions at the boundaries. This phenomenon is first studied by Dwork, who proves the fundamental theorem asserting that if a $p$-adic differential equation defined on an open unit disc is solvable, then any solution $f$ has order of logarithmic growth at most $m-1$. In this paper, we study a conjecture proposed by Dwork on a generalization of this theorem to the case without solvability. We prove new cases of Dwork's conjecture by combining descending techniques of differential modules with the author's previous result on Dwork's conjecture in the rank $2$ case.
\end{abstract}

\section{Introduction}
This paper is a continuation of \cite{Ohkc}, where we study Dwork's conjecture concerning asymptotic behaviors of solutions of $p$-adic differential equations defined on the open unit disc.

Throughout this paper, let $K$ denote a complete discrete valuation field of mixed characteristic $(0,p)$ whose valuation $|\ |$ is non-trivial. We define the ring $K\{t\}$ (resp. $K[\s[t]\s]_0$) as the ring of formal power series over $K$ converging (resp. bounded) on the open unit disc $|t|<1$. Let $M$ be a finite free differential module over $K[\s[t]\s]_0$ of rank $m$. Let $H^0(M\otimes K\{t\})$ denote the $K$-vector space of horizontal sections of the differential module $M\otimes K\{t\}$ over $K\{t\}$. Then, we have $\dim_K H^0(M\otimes K\{t\})\le m$, and we say that $M$ is {\it solvable} (in $K\{t\}$) if an equality holds. There are important examples of solvable finite free differential modules over $K[\s[t]\s]_0$; for example, those coming from geometry via the constructions of Picard-Fuchs modules, such as the $p$-adic Gaussian hypergeometric differential equation with parameter $(1/2,1/2,1)$ (\cite[Appendix]{pde}); more generally, those admitting Frobenius structures. In the case where $M$ is solvable, Dwork proves the following fundamental theorem on asymptotic behaviors of elements of $H^0(M\otimes K\{t\})$.

We endow $K\{t\}$ with the increasing filtration $\{K[\s[t]\s]_{\delta}\}_{\delta\in (-\infty,+\infty)}$ of $K[\s[t]\s]_0$-submodules defined by $K[\s[t]\s]_{\delta}=\{\sum_{i\in\mathbb{N}}a_it^i\in K\{t\};\sup_{i\in\mathbb{N}}|a_i|/(i+1)^{\delta}\le +\infty\}$ if $\delta\ge 0$, and $K[\s[t]\s]_{\delta}=0$ if $\delta<0$ (note that $K[\s[t]\s]_0$ as before coincides with $K[\s[t]\s]_0$ here). We endow $H^0(M\otimes K\{t\})$ with the increasing filtration $\{\Fil_{\delta}H^0(M\otimes K\{t\})\}_{\delta\in (-\infty,+\infty)}$ of $K$-subspaces defined by $\Fil_{\delta}H^0(M\otimes K\{t\})=H^0(M\otimes K\{t\})\cap (M\otimes K[\s[t]\s]_{\delta})$ for $\delta\in (-\infty,+\infty)$.

\begin{thm}[{Dwork, see \cite[Theorem 3.2.1]{And}}]\label{Dwork theorem}
Let $M,m$ be as above. Assume that $M$ is solvable. Then, $\Fil_{m-1}H^0(M\otimes K\{t\})=H^0(M\otimes K\{t\})$.
\end{thm}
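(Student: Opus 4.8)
The plan is to reduce the assertion to a growth bound on the Taylor coefficients of a fundamental solution matrix, and then to extract that bound from the analytic content of solvability. Concretely, I would first fix a basis $e_1,\dots,e_m$ of $M$ over $K[\s[t]\s]_0$ and let $G\in M_m(K[\s[t]\s]_0)$ be the connection matrix, so that a section $\mathbf e\,u$ (with $u$ a column of functions) is horizontal if and only if $\partial u+Gu=0$, where $\partial=d/dt$. Since the entries of $G$ lie in $K[\s[t]\s]_0$, the point $t=0$ is ordinary and a horizontal section has a well-defined value there; solvability then provides $m$ horizontal sections whose values at $0$ span $M/tM$. Assembling these into a solution matrix $Y\in GL_m(K\{t\})$ normalized by $Y(0)=\id$, its columns form a $K$-basis of $H^0(M\otimes K\{t\})$. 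Because a section $\mathbf e\,u$ lies in $\Fil_\delta H^0(M\otimes K\{t\})$ exactly when every entry of $u$ lies in $K[\s[t]\s]_\delta$, it suffices to prove that the Taylor coefficients $Y_n$ of $Y=\sum_n Y_n t^n$ satisfy $|Y_n|\le C\,(n+1)^{m-1}$ for some constant $C$.

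Writing $G=\sum_{k\ge 0}G_k t^k$ with $B:=\sup_k|G_k|<\infty$, the system $\partial Y+GY=0$ is equivalent to the recurrence $(n+1)Y_{n+1}=-\sum_{k=0}^n G_k Y_{n-k}$. The sole difficulty is the division by $n+1$: the trivial estimate $|Y_{n+1}|\le|n+1|^{-1}B\max_{j\le n}|Y_j|$ yields only $|Y_n|\le B^n/|n!|$, which corresponds to convergence on the disc of radius $|p|^{1/(p-1)}<1$ and ignores solvability entirely. The essential input is that solvability is the maximality of the generic radius of convergence of $M$ at each radius $\rho<1$ (equivalently, in the limit $\rho\to 1^-$): the iterated operators $G_{[n]}$ attached to $\tfrac1{n!}\partial^n$ then grow as slowly as the radius permits. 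This maximality is precisely what forces the partial sums $\sum_{k}G_kY_{n-k}$ to acquire the $p$-divisibility that absorbs $|n+1|^{-1}$ and upgrades the radius from $|p|^{1/(p-1)}$ to $1$.

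To locate the exponent $m-1$, I would compare $Y$ with the Taylor solution at the generic point of radius $\rho$ through the transfer (resolvent) matrix. When the generic radius is maximal, the Dwork--Robba effective bounds show that this transfer matrix deviates from a constant matrix only through a nilpotent structure of nilpotency index at most the rank $m$; unwinding this contribution amounts to at most $m-1$ nested integrations, and each integration raises the log-growth order by one. The extremal case is the unipotent rank-$m$ module realizing the iterated logarithms $1,\log(1+t),\dots,\log^{m-1}(1+t)$: since $\log(1+t)=\sum_{n\ge1}(-1)^{n-1}t^n/n$ has coefficients of size $|n|^{-1}$, hence log-growth order exactly $1$ (for the normalization $|p|=p^{-1}$), the power $\log^{m-1}(1+t)$ attains order exactly $m-1$, confirming that the bound is both correct and sharp.

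The main obstacle is the quantitative $p$-adic estimate underlying the last two paragraphs: converting the qualitative maximality of the generic radius into effective control of the partial sums that cancels the accumulated denominators $n!$ in the recurrence, and doing so sharply enough to read off the exact degree $m-1$ rather than some unspecified polynomial bound. I expect to carry this out either via Robba's Newton polygon for the log-growth filtration, whose slopes are bounded by $m-1$ under solvability, or via the Dwork--Robba effective bound theorem combined with the fact that the nilpotent part of the local structure has length at most $m$; in both routes the delicate point is the interplay between the size of $G_{[n]}$ and the valuation $v_p(n!)$.
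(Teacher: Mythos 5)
Your reduction is fine as far as it goes: fixing a basis, normalizing a fundamental solution matrix $Y\in GL_m(K\{t\})$ with $Y(0)=\mathrm{id}$ (evaluation at the ordinary point $t=0$ is indeed injective on horizontal sections), and observing that the theorem is equivalent to the coefficient bound $|Y_n|\le C(n+1)^{m-1}$. But this reduction is the easy part of the theorem, and everything after it is a statement of intent rather than a proof. The entire content of Dwork's theorem is exactly the step you defer: converting solvability into a bound on the iterated matrices $G_{[s]}/s!$ that overcomes the denominators coming from $|n!|$ and produces the exponent $m-1$. That step is the Dwork--Robba effective bound theorem (roughly: for a rank-$m$ module solvable at the generic point of radius $\rho$, one has $\|G_{[s]}/s!\|_\rho\,\rho^{s}\le \bigl|s(s-1)\cdots(s-m+2)\bigr|^{-1}C_\rho\le (s+1)^{m-1}C_\rho$), whose proof is a genuinely nontrivial induction on the rank via cyclic vectors and row reduction; you neither state it precisely nor prove it, and you explicitly list it as ``the main obstacle.'' For comparison, the paper does not reprove this theorem at all: it cites it as [And08, Theorem 3.2.1], and Andr\'e's proof is precisely the route you gesture at (generic solvability plus Dwork--Robba effective bounds), so your plan points at the correct mechanism but, as a self-contained proof, its core is missing.

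Moreover, the paragraph in which you try to ``locate the exponent $m-1$'' is not a valid argument even in outline. A solvable module has no ``nilpotent structure of nilpotency index at most $m$'' in general --- unipotent or nilpotent local structure is an additional hypothesis, not a consequence of solvability --- and ``each integration raises the log-growth order by one'' is a heuristic about the extremal unipotent example, not a mechanism that bounds anything for a general $M$. In the actual Dwork--Robba argument the exponent $m-1$ arises arithmetically, as the inverse norm of a falling factorial of length $m-1$ (note $|k|_p^{-1}\le k$ for a positive integer $k$) produced by the rank induction, not from nested integrations. Your sharpness check with $\log^{m-1}(1+t)$ is correct and worth keeping, but to complete the proof you would have to either reproduce the Dwork--Robba effective bound in full or invoke it as a black box with a precise statement, at which point the remaining content of your argument is the reduction in your first paragraph.
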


We mention recent advances in the study of the filtration $\{\Fil_{\delta}H^0(M\otimes K\{t\})\}_{\delta\in (-\infty,+\infty)}$. Chiarellotto and Tsuzuki propose a conjecture on the filtration above when $M$ admits a Frobenius structure. The conjecture asserts that under a technical hypothesis on $M$ called {\it pure of bounded quotient}, the filtration above coincides with the Frobenius slope filtration of $H^0(M\otimes K\{t\})$, which is a local generalization of Dwork's earlier result on the $p$-adic Gaussian hypergeometric differential equation with parameter $(1/2,1/2,1)$.
This conjecture is proved by themselves in the case of $m\le 2$, and by the author in full generality (\cite{CT,Ohkc,Ohkd}). It is worth mentioning that a global analogue of the notion of pure of bounded quotient plays an important role in the proof of the minimal slope conjecture proposed (\cite{Tsu2}).

In this paper, we study Dwork's conjecture, which is regarded as a generalization of Theorem \ref{Dwork theorem} to the case where $M$ may not be solvable. Precisely speaking, the following form of Dwork's conjecture is formulated by the author so that it fits into the setup of \cite{CT} rather than that of \cite{Dw}.

\begin{conjecture}[{Dwork, \cite[Conjecture 2]{Dw}, \cite[Conjecture]{Ohkc}}]\label{Dwork conjecture}
Let $M$ be as above. Let $n$ denote the dimension of $H^0(M\otimes K\{t\})$. Then, $\Fil_{n-1}H^0(M\otimes K\{t\})=H^0(M\otimes K\{t\})$.
\end{conjecture}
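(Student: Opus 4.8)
The plan is to reduce the conjecture to a sharp ``ladder'' statement about the growth filtration and then to build the ladder by d\'evissage, anchoring it with the rank-two case of \cite{Ohkc}. Concretely, I would first show that it suffices to prove the following: every growth order occurring in $H^0(M\otimes K\{t\})$ is a nonnegative integer, and whenever some horizontal section has growth order $\delta$, the space $\Fil_\delta H^0(M\otimes K\{t\})$ has dimension at least $\delta+1$. Granting this, taking $\delta=\delta_{\max}$ to be the largest occurring growth order gives $n\ge\delta_{\max}+1$, i.e.\ $\Fil_{n-1}H^0(M\otimes K\{t\})=H^0(M\otimes K\{t\})$, as desired. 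The bottom rung $\delta=0$ — the existence of a nonzero \emph{bounded} horizontal section whenever $n\ge 1$ — is exactly the nontrivial content of the non-solvable rank-two case proved in \cite{Ohkc}; the solvable rungs are controlled directly by Theorem \ref{Dwork theorem}.

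To pass from one rung to the next I would argue by induction, producing a short exact sequence of differential modules over $K[\s[t]\s]_0$
\[
0\to M'\to M\to M''\to 0
\]
with $M'$ of rank one carrying a single (bounded) horizontal section, and then reassembling growth bounds through the left-exact sequence $0\to H^0(M'\otimes K\{t\})\to H^0(M\otimes K\{t\})\to H^0(M''\otimes K\{t\})$. Two elementary $p$-adic estimates drive the reassembly. First, the derivation $t\frac{d}{dt}$ multiplies the $i$-th Taylor coefficient by $i$ and $|i|\le 1$, so it never raises the growth order; hence for a lift $\tilde g\in M\otimes K\{t\}$ of a horizontal $\bar g\in H^0(M''\otimes K\{t\})$ the element $\nabla\tilde g\in M'\otimes K\{t\}$ has growth order no larger than that of $\bar g$, where $\nabla$ denotes the connection. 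Second, solving $t\frac{d}{dt}s=\omega$ in the trivialized rank-one module $M'$ costs \emph{at most one} unit of growth, because $|i|^{-1}\le i$ forces $|a_i/i|\le i|a_i|$. Writing $g=\tilde g+s$ with $\nabla s=-\nabla\tilde g$, these two facts show that the growth order of $g$ exceeds that of its image $\bar g$ by at most one, and drops back down exactly when the extra integration is \emph{not} needed — which is governed by whether the relevant coefficient of $\omega$ pairs nontrivially with the horizontal section of $M'$. Iterating the construction descends the growth order one integer at a time and manufactures, from a section of growth $\delta$, a chain of $\delta+1$ linearly independent sections of growth $0,1,\dots,\delta$, which is the ladder statement above.

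The principal difficulty is that the horizontal sections live over $K\{t\}$ while the d\'evissage must take place over the \emph{bounded} ring $K[\s[t]\s]_0$: a single section $f$ spans a trivial $\nabla$-stable line only inside $M\otimes K\{t\}$, and its intersection with $M$ need not be a rank-one sub-differential-module, so the required sub-object $M'$ cannot be read off directly and must be produced by a saturation argument that remains sensitive to the boundary growth. This is sharpest when $M$ is irreducible over $K[\s[t]\s]_0$ yet still acquires sections over $K\{t\}$. The second, and related, delicate point is the \emph{sharpness} of the count: the bound must be $n-1$ rather than the rank bound $m-1$ of Theorem \ref{Dwork theorem}, so every integer growth-level that is actually attained has to be paid for by a genuinely new horizontal section. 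In the exact-sequence language this amounts to controlling the connecting map $H^0(M''\otimes K\{t\})\to H^1(M'\otimes K\{t\})$ — i.e.\ ruling out phantom sections of $M''$ that fail to lift — in the absence of a Frobenius structure and hence of the slope filtration that made the analogous Chiarellotto--Tsuzuki statement tractable. Handling genuinely non-solvable low-rank rungs is precisely where the rank-two result of \cite{Ohkc} is indispensable, and propagating its boundedness conclusion up the ladder while keeping the growth increments tied to the section count $n$ is the heart of the argument.
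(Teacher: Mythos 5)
You should first note what the target actually is: this statement is a \emph{conjecture}, which the paper does not prove in general. The paper establishes only the special cases recorded in Proposition \ref{known result} and the new cases of Theorem \ref{main result}, namely $R_{m-n}(M,1)<1$ and $n=m-1$, both via the descent Condition $(D)$ and Proposition \ref{key observation}. Your proposal claims the full conjecture, so the burden is on the step you yourself flag as ``the principal difficulty,'' and that step is not carried out: you never construct the rank-one sub-differential module $M'\subset M$ \emph{over} $K[\s[t]\s]_0$ (with free quotient and a bounded horizontal section) that your d\'evissage needs. Saying it ``must be produced by a saturation argument that remains sensitive to the boundary growth'' is naming the open problem, not solving it. The paper's own partial result shows exactly how much machinery this costs: descending the subobject $P=H^0(M\otimes K\{t\})\otimes K\{t\}$ to a subobject $L\subset M$ requires the Kedlaya--Xiao decomposition of $M\otimes K\langle\alpha/t,t]\s]_0$ together with an exterior-power gluing argument, and this works \emph{only} under the spectral-gap hypothesis of Theorem \ref{key lemma}; moreover Example \ref{example} shows that $\Phi$ need not be a coretraction, so no soft splitting or saturation over $K\{t\}$ can produce the subobject. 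Your anchoring of the induction is also circular: \cite{Ohkc} proves the conjecture for modules of rank $m=2$; it does not assert that for arbitrary $m$ with $n\ge 1$ there exists a nonzero bounded horizontal section, and reducing arbitrary rank to the rank-two case requires precisely the missing sub-object construction.

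There are two further gaps even granting the short exact sequence. First, your ladder statement is premised on the claim that all occurring growth orders are nonnegative integers and can be descended ``one integer at a time.'' This is not known and is most likely false: the general results give only \emph{rationality} of the log-growth breaks (\cite{Ohk}), and in the Frobenius-structure case the breaks are governed by Frobenius slopes (\cite{CT}), which need not be integers. Second, the quantitative heart --- that every attained growth level is paid for by a genuinely new independent section, so that the bound is $n-1$ rather than the rank bound $m-1$ of Theorem \ref{Dwork theorem} --- requires controlling the connecting map $H^0(M''\otimes K\{t\})\to H^1(M'\otimes K\{t\})$ and proving a \emph{lower} bound mechanism; the integration estimate you invoke only gives the upper bound $\delta(g)\le\delta(\bar g)+1$ for sections that do lift, and you offer no argument ruling out non-liftable sections or forcing the growth to drop at each rung. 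In short, the reduction scheme is reasonable in spirit, but each of its load-bearing steps sits exactly where the conjecture is open; the paper circumvents this only by assuming a separation of generic radii ($R_{m-n}(M,1)<1$, Theorem \ref{key lemma}), from which the $n=m-1$ case follows via the transfer argument of Lemma \ref{DTT}.
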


We record all the known cases on Conjecture \ref{Dwork conjecture} at this point.

\begin{proposition}\label{known result}
Let $M,m$, and $n$ be as in Conjecture \ref{Dwork conjecture}. Then, Conjecture \ref{Dwork conjecture} is true either when $m\le 2$ or when $n=m,0$.
\end{proposition}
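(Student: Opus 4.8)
The plan is to treat the three asserted regions separately, reducing each to a result already at hand. First I would dispose of the case $n=0$, which is essentially vacuous: here $H^0(M\otimes K\{t\})=0$ by the definition of $n$, so both $\Fil_{n-1}H^0(M\otimes K\{t\})$ and $H^0(M\otimes K\{t\})$ are the zero space and the conjectured equality holds trivially. (Concretely, $\Fil_{-1}H^0(M\otimes K\{t\})=H^0(M\otimes K\{t\})\cap(M\otimes K[\s[t]\s]_{-1})=0$ since $K[\s[t]\s]_{\delta}=0$ for $\delta<0$.)

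Next I would treat the case $n=m$. By the very definition of solvability, the equality $n=\dim_K H^0(M\otimes K\{t\})=m$ says precisely that $M$ is solvable, so Theorem \ref{Dwork theorem} applies and yields $\Fil_{m-1}H^0(M\otimes K\{t\})=H^0(M\otimes K\{t\})$. Substituting $m=n$ gives exactly $\Fil_{n-1}H^0(M\otimes K\{t\})=H^0(M\otimes K\{t\})$, as required.

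It remains to handle $m\le 2$, and here I would first isolate what is genuinely new. If $m=0$ then $M=0$ and $n=0$; if $m=1$ then $n\in\{0,1\}=\{0,m\}$; in either situation one of the two cases already settled applies. Thus for $m\le 2$ the only case not yet covered is $m=2$ with $n=1$, since for $m=2$ one has $n\in\{0,1,2\}$ and the values $n=0,2$ fall under the previous arguments. For this remaining instance I would invoke the author's earlier work \cite{Ohkc}, in which Dwork's conjecture is established in the rank $2$ case. I expect this $(m,n)=(2,1)$ instance to be the main difficulty and the sole substantive content of the proposition; it is not reproved here but imported as the cited input. The purpose of the present statement is simply to record that, granting \cite{Ohkc}, the three regions together exhaust the cases of Conjecture \ref{Dwork conjecture} known at this point.
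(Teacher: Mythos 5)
Your proposal is correct and follows essentially the same route as the paper: the case $n=m$ is Dwork's theorem (Theorem \ref{Dwork theorem}), the case $n=0$ is trivial since both sides vanish, the cases $m\le 1$ reduce to these, and the genuinely new case $(m,n)=(2,1)$ is imported from \cite[Main Theorem]{Ohkc}. If anything, your write-up is slightly more explicit than the paper's, which handles $n=0$ only implicitly while reducing $m=1$ to the ``previous cases $n=0,1$.''
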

\begin{proof}
In the case of $n=m$, the assertion is nothing but Theorem \ref{Dwork theorem}. In the case of $m=0$, we have nothing to prove. In the case of $m=1$, the assertion follows from the previous cases as $n=0,1$. In the case of $m=2$, the assertion is proved in \cite[Main Theorem]{Ohkc}.
\end{proof}

The goal of this paper is to prove the following new cases of Conjecture \ref{Dwork conjecture}.

\begin{thm}\label{main result}
Let $M,m$, and $n$ be as in Conjecture \ref{Dwork conjecture} such that $M\neq 0$.
\begin{enumerate}
\item Assume that $\lim_{\rho\to 1-0}R_{m-n}(M\otimes K(t)_{\rho})<1$, where $R_{m-n}(M\otimes K(t)_{\rho})$ denotes the $(m-n)$-th subsidiary generic radius of convergence of $M\otimes K(t)_{\rho}$ (\cite[Definition 9.8.1]{pde}). Then, Conjecture \ref{Dwork conjecture} for $M$ is true.
\item Assume that $n=m-1$. Then, Conjecture \ref{Dwork conjecture} for $M$ is true.
\end{enumerate}
\end{thm}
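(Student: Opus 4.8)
The plan is to reduce both assertions to Dwork's theorem (Theorem \ref{Dwork theorem}) by isolating, inside $M$, a \emph{solvable} differential module of rank $n$ that carries all of the horizontal sections. Write $H=H^0(M\otimes K\{t\})$. The guiding observation is that the filtration $\Fil_\delta H$ records only the growth of solutions as $|t|\to 1-0$, so it is insensitive to any part of $M$ that contributes no section converging up to the boundary; thus the problem is governed entirely by the ``solvable part'' of $M$ near $|t|=1$. I would first make this precise: a nonzero horizontal section in $H$ cannot lie in a part of $M$ whose subsidiary radii remain bounded away from $1$, so its restriction to an annulus $\eta<|t|<1$ lands in the maximal submodule whose subsidiary radii tend to $1$, and distinct sections remain distinct by analytic continuation on the connected disc. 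Hence, writing $n'$ for the rank of this boundary-solvable part, one has $n\le n'$.

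For (i), the hypothesis $\lim_{\rho\to 1-0}R_{m-n}(M\otimes K(t)_\rho)<1$ forces at most $n$ of the subsidiary radii to approach $1$, i.e.\ $n'\le n$; combined with $n\le n'$ this gives $n'=n$ together with a genuine spectral gap at the $(m-n)$-th radius near the boundary. I would then invoke the decomposition of $M$ by subsidiary radii (see \cite{pde}) across this gap to split $M$, over a ring of bounded functions on an annulus near $|t|=1$, as $M_{<}\oplus M_{=}$ with $\mathrm{rank}\,M_{=}=n$ solvable and $M_{<}$ of rank $m-n$ having all radii bounded away from $1$. The descending step is to descend $M_{=}$ to a solvable finite free differential module $N$ over $K[\s[t]\s]_0$ with $H^0(N\otimes K\{t\})=H$ compatibly with the filtrations; since $M_{<}$ admits no section converging to the boundary, $H$ is unchanged and $\Fil_\delta H$ is computed on $N$. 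Applying Theorem \ref{Dwork theorem} to $N$ (solvable of rank $n$) yields $\Fil_{n-1}H=H$.

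For (ii), with $n=m-1$ we have $m-n=1$, so $R_{m-n}=R_1$ is the generic radius of convergence. If $\lim_{\rho\to 1-0}R_1<1$ we are already in case (i) and done. The genuinely new case is $\lim_{\rho\to 1-0}R_1=1$, where $M$ is solvable near the boundary in the radius sense ($n'=m$) yet has only $n=m-1$ global sections, so (i) does not apply. Here I would use the descending technique to produce a rank-$(m-1)$ solvable differential submodule $M'\subset M$ over $K[\s[t]\s]_0$ with $H^0(M'\otimes K\{t\})=H$ (equivalently, a rank-$1$ non-solvable quotient $M/M'$), which immediately gives $\Fil_{m-2}H=H$ by Theorem \ref{Dwork theorem}, i.e.\ $\Fil_{n-1}H=H$. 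The single ``missing'' direction is analyzed through the rank-$2$ subquotients obtained by pairing it with lines of the solvable part, to which the rank-$2$ case of the conjecture (\cite{Ohkc}, recorded in Proposition \ref{known result}) applies; this is the point where the rank-$2$ result is combined with descent.

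The main obstacle throughout is the descending step: passing from a decomposition or a solvable piece defined only near the boundary to an honest finite free differential module over $K[\s[t]\s]_0$, and---most delicately---verifying that this passage is compatible with the growth filtration $\Fil_\delta$, so that the log-growth bound furnished by Theorem \ref{Dwork theorem} on the descended module transfers back to $M$. The radius estimates controlling this compatibility, and the reduction of the corank-$1$ case (ii) to the rank-$2$ result, are where the technical heart of the argument lies.
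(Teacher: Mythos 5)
Your outline for part (i) traces the same route as the paper---exactly $n$ subsidiary radii tend to $1$ (the paper shows $R_{m-n+1}(M,\rho)=\dots=R_m(M,\rho)=\rho$ because the horizontal sections span a trivial submodule), then a Kedlaya--Xiao decomposition $M\otimes K\langle\alpha/t,t]\s]_0\cong Q\oplus Q'$ on a boundary annulus with $Q$ of rank $n$, then descent, then Theorem \ref{Dwork theorem}---but it stops exactly where the proof begins. The step you label ``the descending step'' and defer is the entire content of the theorem, and it is not a routine patching argument: it is not clear one can descend $M_{=}$ (i.e.\ $Q$) directly at all. The paper's mechanism is: (a) prove $\Hom(P\otimes K\langle\alpha/t,t\},Q'\otimes K\langle\alpha/t,t\})=0$ from disjointness of radii, so the inclusion of $P$ factors through an isomorphism onto $Q\otimes K\langle\alpha/t,t\}$; (b) pass to $n$-th exterior powers, so that the two images become one rank-one module with horizontal generators $f'$ and $g'$ differing by a unit, which by Lemma \ref{unit lemma} lies in $K\langle\alpha/t,t]\s]_0^{\times}$ and can be absorbed into the basis of $Q$; (c) glue along the exact sequence $0\to K[\s[t]\s]_0\to K\{t\}\times K\langle\alpha/t,t]\s]_0\to K\langle\alpha/t,t\}\to 0$ to produce a single horizontal element $e\in\wedge^nM$ defined over $K[\s[t]\s]_0$; (d) set $L=\ker(x\mapsto x\wedge e)$, which is finite free by the Property $(P)$/elementary-divisor theory of \S\ref{differential}, and check $L\otimes K\{t\}\cong P$. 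The reduction to rank one via $\wedge^n$ is what makes the gluing possible; without it (or a substitute) your plan is a statement of intent, not a proof. By contrast, the filtration compatibility you flag as delicate is the easy direction: once $L\subset M$ is defined over $K[\s[t]\s]_0$, a section of log-growth $\delta$ in $L$ has log-growth $\delta$ in $M$, and the diagram chase of Proposition \ref{key observation} finishes.

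For part (ii) there is a genuine gap of a different kind: the case you call ``genuinely new,'' namely $\lim_{\rho\to 1-0}R_1(M\otimes K(t)_{\rho})=1$ with $n=m-1$, is vacuous. This is the paper's Lemma \ref{DTT} (a Dwork transfer theorem over $K[\s[t]\s]_0$, using that $K$ is discretely valued): if $R_1(M,1)=1$ then all $R_i(M,1)=1$, convexity of $r\mapsto F_m(M,r)$ together with the $p$-adic Cauchy theorem forces $R_i(M,\rho)=\rho$ for all $i$ and all $\rho\in(0,1)$, and Dwork's transfer theorem applied on each disc $|t|<\rho$ yields $m$ independent solutions, i.e.\ $n=m$. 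Hence $n=m-1$ already implies $R_{m-n}(M,1)=R_1(M,1)<1$, and (ii) is an immediate corollary of (i). Your proposed handling of this (empty) case---a rank-$(m-1)$ solvable submodule plus an analysis of the ``missing direction'' through rank-$2$ subquotients and the rank-$2$ case of the conjecture---is not a proof: no construction of the submodule is given, there is no reason the rank-$2$ subquotients are defined over $K[\s[t]\s]_0$, and no mechanism is offered for transferring growth bounds from such subquotients back to $H^0(M\otimes K\{t\})$. (The paper's rank-$2$ predecessor \cite{Ohkc} enters only through its techniques, which are generalized in (i); its result is never invoked in the proof.) The missing ingredient is precisely the transfer theorem, which eliminates the case rather than solving it.
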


We briefly explain the strategy of the proof of Theorem \ref{main result}. After technical preparations in \S\S \ref{ring}, \ref{differential}, we prove Part (i) of Theorem \ref{main result} in \S \ref{proof} by constructing a certain submodule $L$ of $M$ of free of rank $m-1$ stable under $D$, which is solvable, and then applying Theorem \ref{Dwork theorem} to $L$. To construct such an $L$ under an assumption as in Part (i) of Theorem 1.4, we extend the method of \cite{Ohkc}, which works only in the case of $(m,n)=(2,1)$, to the case of an arbitrary $(m,n)$. Precisely speaking, this is done by using a trick involving exterior powers, which is analogous to the one in the theory of $\varphi$-modules over the Robba ring (\cite{Doc}). Part (ii) of Theorem \ref{main result} is an immediate consequence of Part (i) and a variant of Dwork transfer theorem. 

Finally, we explain our motivation to study Conjecture \ref{Dwork conjecture}. In the theory of $G$-functions, we study arithmetic aspects of ordinary linear differential equations of the form $d^mf/dt^m+p_{m-1}d^{m-1}f/dt^{m-1}+\dots+p_0f=0$ for $p_{m-1},\dots,p_0\in\mathbb{Q}(t)$. To such differential equations, we can associate differential modules $N$ over $\mathbb{Q}_p[\s[t]\s]_0$ via appropriate base changes. As the conjecture of Chiarellotto and Tsuzuki suggests, it is natural to ask whether the filtration $\{\Fil_{\delta}H^0(N\otimes \mathbb{Q}_p\{t\})\}_{\delta\in (-\infty,+\infty)}$ has arithmetic information about the given differential equation. To study this filtration, Theorems \ref{Dwork theorem} may not be applicable since $N$ may not be solvable by Dwork-Frobenius lemma (\cite[Lemma IV 2.1]{G}). Hence, the author thinks that we should prove general properties of the filtration $\{\Fil_{\delta}H^0(M\otimes K\{t\})\}_{\delta\in (-\infty,+\infty)}$ valid even when $M$ is not solvable, such as Conjecture \ref{Dwork conjecture}.

\section*{Acknowledgement}
This work is supported by JSPS KAKENHI Grant-in-Aid for Scientific Research (C) 22K03227 and Grant-in-Aid for Scientific Research (S) 24H00015.

\section{Rings of formal Laurent series}\label{ring}
In this section, we recall the definitions and properties of rings used in the theory of $p$-adic differential equations, mainly following \cite[\S 8]{pde}.

Throughout this paper, a {\it ring} means a commutative associative ring. Furthermore, for a ring $R$, an {\it $R$-module} means a left $R$-module, and a {\it derivation} $d$ {\it on} $R$ means a derivation $d:R\to R$ in the usual sense. A {\it formal Laurent series} over $K$ means a formal sum $\sum_{i\in\mathbb{Z}}a_it^i$ with $a_i\in K$ for $i\in\mathbb{Z}$.

Let $\alpha,\beta$ be elements of $[0,+\infty)$ such that $\alpha\le\beta$ and $(\alpha,\beta)\neq (0,0)$. We define the ring $K\langle\alpha/t,t/\beta\rangle$ for any $\alpha,\beta$, and the rings $K\langle\alpha/t,t/\beta]\s]_0,K\langle\alpha/t,t/\beta\}$ when $\alpha\neq\beta$. When $\alpha\neq 0$, we define $K\langle\alpha/t,t/\beta\rangle$ as the set of formal Laurent series $\sum_{i\in\mathbb{Z}}a_it^i$ satisfying $|a_i|\alpha^i\to 0$ as $i\to-\infty$ and $|a_i|\beta^i\to 0$ as $i\to+\infty$. Furthermore, when $\alpha\neq 0$, we define $K\langle\alpha/t,t/\beta]\s]_0$ (resp. $K\langle\alpha/t,t/\beta\}$) as the set of formal Laurent series $\sum_{i\in\mathbb{Z}}a_it^i$ satisfying $|a_i|\alpha^i \to 0$ as $i\to-\infty$ and $\sup_{i\ge 0}|a_i|\beta^i<+\infty$ (resp. $|a_i|\alpha^i \to 0$ as $i\to-\infty$ and $|a_i|\gamma^i\to 0$ as $i\to+\infty$ for $\gamma\in (0,\beta))$. When $\alpha=0$, we define $K\langle\alpha/t,t/\beta\rangle$ (resp. $K\langle\alpha/t,t/\beta]\s]_0,K\langle\alpha/t,t/\beta\}$) as the set of formal power series $\sum_ia_it^i$ satisfying $|a_i|\beta^i\to 0$ as $i\to+\infty$ (resp. $\sup_{i\ge 0}|a_i|\beta^i<+\infty$, $|a_i|\gamma^i\to 0$ as $i\to+\infty$ for $\gamma\in (0,\beta))$. In all cases, the multiplications are given by convolution. For simplicity, we denote $K\langle 0/t,t/\beta\rangle,K\langle 0/t,t/\beta]\s]_0$, and $K\langle 0/t,t/\beta\}$ by $K\langle t/\beta\rangle,K[\s[t/\beta]\s]_0$, and $K\{t/\beta\}$ respectively, which are regarded as subrings of the ring $K[\s[t]\s]$ of formal power series over $K$. Moreover, we denote $K\langle t/1\rangle,K[\s[t/1]\s]_0$, and $K\{t/1\}$ by $K\langle t\rangle,K[\s[t]\s]_0$, and $K\{t\}$ respectively. 

Let $\rho\in (0,+\infty)$. We define the ring $K(t)_{\rho}$ as the completion of $K(t)$ with respect to $\rho$-Gauss valuation. Also, define the {\it Amice ring} $\mathcal{E}$ over $K$ as the set of formal Laurent series $\sum_{i\in\mathbb{Z}}a_it^i$ such that $\sup_{i\ge 0}|a_i|<+\infty$, and $|a_i|\to 0$ as $i\to-\infty$ with multiplication given by convolution. Both $K(t)_{\rho}$ and $\mathcal{E}$ are complete non-archimedean valuation fields of mixed characteristic $(0,p)$, where the valuation on $\mathcal{E}$ is given by $1$-Gauss valuation.

We have the chains of rings $K\langle\alpha/t,t/\beta\rangle\subset K\langle\alpha/t,t/\beta]\s]_0\subset K\langle\alpha/t,t/\beta\}$, and $K\langle\alpha'/t,t/\beta'\rangle\subset K\langle\alpha/t,t/\beta\rangle$, $K\langle\alpha/t,t/\beta]\s]_0\subset K\langle\alpha'/t,t/\beta']\s]_0$, and $K\langle\alpha/t,t/\beta\}\subset K\langle\alpha'/t,t/\beta'\}$ for $\alpha',\beta'\in [0,+\infty)$ such that $\alpha\le\alpha'\le\beta'\le \beta$. We have a natural injective ring homomorphism $K\langle\alpha/t,t/\beta\rangle\to K(t)_{\rho}$ for $\rho\in [\alpha,\beta)$. We also have natural injective ring homomorphisms $K\langle\alpha/t,t/\beta]\s]_0\to K(t)_{\rho},K\langle\alpha/t,t/\beta]\s]_0\to K(t)_{\rho}$ for $\rho\in (\alpha,\beta)$. Furthermore, when $\alpha\in [0,1)$, we have a natural injective ring homomorphism $K\langle\alpha/t,t]\s]_0\to\mathcal{E}$.

For a formal power (resp. Laurent) series $x=\sum_{i\in\mathbb{N}}a_it^i$ (resp. $\sum_{i\in\mathbb{Z}}a_it^i$), we denote the formal power (resp. Laurent) series $\sum_{i\in\mathbb{N}}(i+1)a_{i+1}t^i$ (resp. $\sum_{i\in\mathbb{Z}}(i+1)a_{i+1}t^i$) by $d(x)$. Then, we define the derivations $d$ on the rings $K[\s[t]\s],K\langle\alpha/t,t/\beta\rangle$, $K\langle\alpha/t,t/\beta]\s]_0$, and $K\langle\alpha/t,t/\beta\}$ by $x\mapsto d(x)$. We similarly define derivations $d$ on $K(t)_{\rho},\mathcal{E}$.

\begin{lemma}\label{unit lemma}
For $\alpha,\beta\in [0,+\infty)$ such that $\alpha<\beta$, we have $K\langle\alpha/t,t/\beta\}=\cap_{\beta'\in (\alpha,\beta)}K\langle\alpha/t,t/\beta'\rangle$, and $K\langle\alpha/t,t/\beta\}^{\times}=K\langle\alpha/t,t/\beta]\s]_0^{\times}$.
\end{lemma}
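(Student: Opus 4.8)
The guiding observation is that all three rings in the statement, as well as each $K\langle\alpha/t,t/\beta'\rangle$ with $\beta'\in(\alpha,\beta)$, impose the \emph{same} condition at the inner radius on a formal Laurent series $\sum_{i\in\mathbb{Z}}a_it^i$ (namely $|a_i|\alpha^i\to 0$ as $i\to-\infty$ when $\alpha>0$, and $a_i=0$ for $i<0$ when $\alpha=0$); only the condition at the outer radius $\beta$ differs. I would therefore carry the inner condition along unchanged and compare only the outer conditions. For the first identity this is immediate: membership in $\cap_{\beta'\in(\alpha,\beta)}K\langle\alpha/t,t/\beta'\rangle$ means $|a_i|(\beta')^i\to 0$ as $i\to+\infty$ for every $\beta'\in(\alpha,\beta)$, while membership in $K\langle\alpha/t,t/\beta\}$ means $|a_i|\gamma^i\to 0$ for every $\gamma\in(0,\beta)$. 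The inclusion $\subseteq$ holds since $(\alpha,\beta)\subseteq(0,\beta)$, and for $\supseteq$, given $\gamma\in(0,\beta)$ I would pick $\beta'\in(\alpha,\beta)$ with $\beta'\ge\gamma$ and use $\gamma^i\le(\beta')^i$ for $i\ge 0$; this step is routine.

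For the second identity, the inclusion $K\langle\alpha/t,t/\beta]\s]_0^{\times}\subseteq K\langle\alpha/t,t/\beta\}^{\times}$ is formal, since $K\langle\alpha/t,t/\beta]\s]_0\subseteq K\langle\alpha/t,t/\beta\}$ and an inverse computed in the smaller ring is an inverse in the larger one. The content is the reverse inclusion, which I would prove by showing that a unit of $K\langle\alpha/t,t/\beta\}$ has a single dominant monomial, forcing boundedness at $\beta$. Let $u=\sum_i a_it^i\in K\langle\alpha/t,t/\beta\}^{\times}$. For each $\rho\in(\alpha,\beta)$ the defining conditions give $|a_i|\rho^i\to 0$ as $|i|\to\infty$, so $|u|_\rho:=\sup_i|a_i|\rho^i$ is finite and attained; moreover, using the first identity to embed $K\langle\alpha/t,t/\beta\}$ into $K\langle\alpha/t,t/\beta'\rangle$ for some $\beta'\in(\rho,\beta)$ and then into $K(t)_\rho$, the function $|\cdot|_\rho$ is the restriction of the multiplicative $\rho$-Gauss valuation, hence is itself multiplicative.

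Now $x\mapsto\log|u|_{e^x}$ and $x\mapsto\log|u^{-1}|_{e^x}$ are convex functions of $x=\log\rho$ on $(\log\alpha,\log\beta)$ (each is a supremum of the affine functions $ix+\log|a_i|$), and by multiplicativity their sum equals $\log|1|_\rho=0$. A convex function whose sum with another convex function is affine is itself affine, so $\log|u|_{e^x}=jx+c$ for a constant slope. Since this affine function is the supremum of the integer-slope affine functions $ix+\log|a_i|$ and the supremum is attained at each $x$, the slope $j$ must equal one of the indices and the corresponding term must dominate: $a_j\neq 0$ and $|a_i|\rho^i\le|a_j|\rho^j$ for all $i$ and all $\rho\in(\alpha,\beta)$. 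Letting $\rho\to\beta^{-}$ yields $|a_i|\beta^i\le|a_j|\beta^j$ for every $i\ge j$; as there are only finitely many indices in $[0,j)$, this gives $\sup_{i\ge 0}|a_i|\beta^i<+\infty$. Together with the inner condition inherited from $u$, this places $u$ in $K\langle\alpha/t,t/\beta]\s]_0$; applying the same argument to $u^{-1}$ (again a unit of $K\langle\alpha/t,t/\beta\}$) puts $u^{-1}$ there too, so $u\in K\langle\alpha/t,t/\beta]\s]_0^{\times}$.

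The main obstacle is the extraction of the dominant monomial in the previous paragraph: it rests on the multiplicativity of the $\rho$-Gauss valuation and on the Newton-polygon/convexity input (cf.\ \cite[\S 8]{pde}), and one has to check that the degenerate case $\alpha=0$ (where necessarily $j=0$) is handled by the same argument and that the non-strict inequalities produced by the limit $\rho\to\beta^{-}$ still suffice for the boundedness bound. Everything else is bookkeeping with the convergence conditions.
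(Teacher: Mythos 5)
Your proof is correct, but it runs along a genuinely different line from the paper's. The paper's proof of the second identity is a short argument by contradiction: if a unit $x=\sum_i a_it^i$ of $K\langle\alpha/t,t/\beta\}$ were \emph{not} in $K\langle\alpha/t,t/\beta]\s]_0$, then (the inner tail being automatically bounded) one would have $\sup_{i\ge 0}|a_i|\beta^i=+\infty$, hence $|x|_{\gamma}\to+\infty$ as $\gamma\to\beta-0$; multiplicativity of $|\ |_{\gamma}$ then forces $|x^{-1}|_{\gamma}\to 0$, so every coefficient of $x^{-1}$ satisfies $|b_i|\beta^i=0$, i.e.\ $x^{-1}=0$, absurd. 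The case $\alpha=0$ is handled there not directly but by reduction, via $K\{t/\beta\}^{\times}\subset K\langle(\beta/2)/t,t/\beta\}^{\times}$. You instead prove the stronger structural statement that a unit has a dominant monomial, using convexity of $\log|u|_{e^x}$ in $x=\log\rho$ plus the rigidity that two convex functions summing to an affine function are both affine; this is the classical Lazard/Newton-polygon description of units in annulus rings. Both proofs hinge on exactly the same key input -- multiplicativity of the Gauss norms $|\ |_{\rho}$ for $\rho$ strictly inside the annulus -- but your route buys more (purity of units, a uniform treatment of $\alpha=0$ with no reduction step) at the cost of the one delicate step you correctly flag: to see that the affine sup-line coincides with one of the integer-slope lines $ix+\log|a_i|$, one should note that a line of slope $i\neq j$ lying below the sup-line can touch it at most once, while the supremum is attained at uncountably many points and there are only countably many indices, so some index $j$ must realize the line identically. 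With that pigeonhole argument made explicit, and with the observation (which you use implicitly) that the limit $\rho\to\beta^{-}$ in $|a_i|\rho^i\le|a_j|\rho^j$ is legitimate coefficientwise, your argument is complete; the first identity is, as in the paper, a matter of unwinding the definitions.
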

\begin{proof}
The first assertion follows by definition. To prove the second assertion, it suffices to prove that $K\langle\alpha/t,t/\beta\}^{\times}\subset K\langle\alpha/t,t/\beta]\s]_0$. We first consider the case of $\alpha\neq 0$. Recall that the function $|\ |_{\gamma}:K\langle\alpha/t,t/\beta\}\to \mathbb{R}_{\ge 0};\sum_{i\in\mathbb{Z}}a_it^i\mapsto \sup_i|a_i|\gamma^i$ for $\gamma\in [\alpha,\beta)$ is a valuation (\cite[Definition 8.2.1]{pde}). We suppose, by way of contradiction, that there exists $x=\sum_{i\in\mathbb{Z}}a_it^i\in K\langle\alpha/t,t/\beta\}^{\times}$ such that $x\notin K\langle\alpha/t,t/\beta]\s]_0$. Then, we have $\sup_{i<0}|a_i|\beta^i\le \sup_{i<0}|a_i|\alpha^i\le |x|_{\alpha}<+\infty$. Hence, $\sup_{i\ge 0}|a_i|\beta^i=+\infty$, which implies that $|x|_{\gamma}\to+\infty$ as $\gamma\to \beta-0$. Hence, we have $|y|_{\gamma}=1/|x|_{\gamma}\to 0$ as $\gamma\to \beta-0$. Since $|a_i|\gamma^i\le |y|_{\gamma}$ for $i\in \mathbb{Z}$ and $\gamma\in [\alpha,\beta)$, we have $|a_i|\beta^i=0$ for $i\in \mathbb{Z}$. Hence, $a_i=0$ for $i\in \mathbb{Z}$, i.e., $x=0$, which is a contradiction.

In the case of $\alpha=0$, we have $K\{t/\beta\}^{\times}\subset K\langle (\beta/2)/t,t/\beta\}^{\times}\cap K\{t/\beta\}\subset K\langle (\beta/2)/t,t/\beta]\s]_0\cap K\{t/\beta\}=K[\s[t/\beta]\s]_0$ by the previous case.
\end{proof}

We recall some definitions in the ring theory. Let $R$ be a ring. We say that $R$ is an {\it elementary divisor ring} if, for any matrix $A\in M_{mn}(R)$, there exist $U\in GL_m(R),V\in GL_n(R)$ and elements $d_1,\dots,d_{\min\{m,n\}}\in R$ satisfying $d_1|\dots|d_{\min\{m,n\}}$, such that $UAV$ is equal to the diagonal matrix in $M_{mn}(R)$ whose diagonal entries are given by $d_1,\dots,d_{\min\{m,n\}}$; the elements $d_1,\dots,d_{\min\{m,n\}}$ are called the {\it elementary divisors} of $A$. We say that $R$ is {\it B\'ezout} (resp. {\it B\'ezout domain}) if any finitely generated ideal of $R$ is principal (resp. and  $R$ is an integral domain). We say that $R$ is an {\it elementary divisor domain} if $R$ is both an elementary divisor ring and an integral domain. Finally, we say that $R$ is {\it adequate} if it is a B\'ezout integral domain, and, for any $a,b\in R$, there exist $a_1,a_2\in R$ such that $a=a_1a_2$, $(a_1,b)=R$, and, for any $a_3|a_2$ with $a_3\notin R^{\times}$, we have $(a_3,b)\neq R$. Recall that if $R$ is adequate, then $R$ is an elementary divisor domain (\cite[Theorem 3]{Hel}). Also, recall that if $R$ is B\'ezout, then an $R$-module $M$ is flat if and only if $M$ is torsion free. Hence, if $R,R'$ are a B\'ezout domain and an integral domain respectively, then any injective ring homomorphism $f:R\to R'$ is flat.

\begin{lemma}\label{adequate}
The rings $K\langle\alpha/t,t/\beta\rangle,K\langle\alpha/t,t/\beta]\s]_0$, and $K\langle\alpha/t,t/\beta\}$ for $\alpha,\beta$ as above are elementary divisor domains.
\end{lemma}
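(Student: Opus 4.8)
The plan is to show that each of these rings is adequate, which by the cited result of Helmer (\cite[Theorem 3]{Hel}) immediately gives that they are elementary divisor domains. Since each ring is an integral domain (being a subring of a formal Laurent series ring over a field), the real content is to verify the B\'ezout property together with the factorization condition in the definition of adequacy. I would first reduce to a single representative case: the chains of inclusions in the excerpt, together with Lemma \ref{unit lemma}, suggest that the three flavors of ring are tightly related, so establishing adequacy for the bounded ring $K\langle\alpha/t,t/\beta]\s]_0$ and transferring the property to the others via the unit-group identity and the intersection description should streamline the argument.

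First I would establish the B\'ezout property. The standard strategy for rings of convergent or bounded power/Laurent series is to exploit a Weierstrass-type preparation or division theorem: any nonzero element can be written as a unit times a "polynomial part" (or a product of a monomial $t^k$ and a unit, after controlling zeros inside the relevant annulus). Concretely, I expect that the Newton polygon of a formal Laurent series in $K\langle\alpha/t,t/\beta]\s]_0$ controls its zeros on the annulus $\alpha\le|t|<\beta$, and that one can factor out the finitely many zeros in any closed sub-annulus. Given two elements $a,b$, reducing to their zeros on the annulus should let me build a generator of $(a,b)$ as a product over the common zeros, exactly as one does for the Robba ring and its relatives; the valuations $|\ |_{\gamma}$ recalled in the proof of Lemma \ref{unit lemma} are the tool for making the convergence bookkeeping precise.

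Next I would verify the adequacy condition itself: given $a,b$, I must split $a=a_1a_2$ with $(a_1,b)=R$ and with every non-unit divisor of $a_2$ failing to be coprime to $b$. Once zeros are under control, the natural choice is to let $a_2$ collect precisely the zeros of $a$ that $a$ shares with $b$ (the "common part") and let $a_1$ collect the rest; then $a_1$ and $b$ have no common zeros, giving $(a_1,b)=R$, while any non-unit factor of $a_2$ retains a zero of $b$, so it cannot be coprime to $b$. Making this rigorous requires knowing that the zero set in the annulus is well-behaved (locally finite, with a sensible notion of multiplicity) and that one may realize a prescribed zero divisor by an actual element of the ring.

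The main obstacle will be the boundary behavior at $\gamma\to\beta-0$, where the bounded ring $K\langle\alpha/t,t/\beta]\s]_0$ differs subtly from the convergent ring $K\langle\alpha/t,t/\beta\rangle$. Zeros can accumulate toward the outer radius $\beta$, so the "finitely many zeros" heuristic is only valid on closed sub-annuli $\alpha\le|t|\le\gamma$ with $\gamma<\beta$, and I must ensure that the factorizations produced on these sub-annuli glue to genuine elements of the ring with the correct growth at the boundary. I expect this to be handled by passing to the limit over $\gamma$ using $K\langle\alpha/t,t/\beta\}=\cap_{\beta'\in(\alpha,\beta)}K\langle\alpha/t,t/\beta'\rangle$ from Lemma \ref{unit lemma} and the matching of unit groups $K\langle\alpha/t,t/\beta\}^{\times}=K\langle\alpha/t,t/\beta]\s]_0^{\times}$, which is precisely what lets factorizations valid "up to units" descend from one ring to another. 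The edge case $\alpha=0$ (the power-series side) should follow from the $\alpha\neq 0$ case by the same embedding trick used at the end of the proof of Lemma \ref{unit lemma}.
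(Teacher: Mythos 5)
Your high-level frame (adequate $\Rightarrow$ elementary divisor domain via Helmer, control of zeros, splitting $a=a_1a_2$ into a part coprime to $b$ and a part sharing all its zeros with $b$) is the right picture — but only for one of the three rings, and your proposed reduction runs in an unworkable direction. You plan to prove adequacy for the bounded ring $K\langle\alpha/t,t/\beta]\s]_0$ and then transfer it to $K\langle\alpha/t,t/\beta\}$ via Lemma \ref{unit lemma}. This cannot work, and you have located the difficulty in the wrong ring. Since $K$ is discretely valued, an element of $K\langle\alpha/t,t/\beta\rangle$ or of $K\langle\alpha/t,t/\beta]\s]_0$ has only \emph{finitely} many zeros in the relevant annulus (its coefficient valuations are discrete and bounded below, so the Newton polygon has finitely many relevant slopes); these two rings are in fact principal ideal domains, which is exactly how the paper handles them (\cite[Lemma 9.1.1]{pde}), and PIDs are trivially elementary divisor domains. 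The genuine obstacle lives in $K\langle\alpha/t,t/\beta\}$: its elements may have infinitely many zeros accumulating at the outer boundary. Such an element cannot be written as a unit times an element of the bounded subring (units have no zeros and bounded elements have finitely many), so the unit-group identity and the intersection description of Lemma \ref{unit lemma} are far too weak to carry B\'ezout-ness or adequacy from the bounded ring up to $K\langle\alpha/t,t/\beta\}$.

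Second, the step you describe as ``passing to the limit over $\gamma$'' is precisely the deep content, not convergence bookkeeping. To run your coprime/common-part splitting in $K\langle\alpha/t,t/\beta\}$ you must produce an element of that ring realizing a prescribed, possibly infinite, divisor on the half-open annulus; this is Lazard's theorem, which the paper invokes (\cite{Laz}, see also \cite[Corollaire 4.30]{Chr2}) following the argument of \cite[Proposition 4.12]{Ber} rather than reproves. A correct version of your proposal would therefore split into two genuinely different cases: (a) dispose of $K\langle\alpha/t,t/\beta\rangle$ and $K\langle\alpha/t,t/\beta]\s]_0$ by Weierstrass preparation and the PID property, where discreteness of the valuation is essential; and (b) for $K\langle\alpha/t,t/\beta\}$, cite or prove Lazard's divisor-realization result and only then carry out the adequacy factorization you sketch.
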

\begin{proof}
The rings $K\langle\alpha/t,t/\beta\rangle,K\langle\alpha/t,t/\beta]\s]_0$ are principal ideal domains (\cite[Lemma 9.1.1]{pde}; recall that we assume that $K$ is discretely valued). The ring $K\langle\alpha/t,t/\beta\}$ is adequate, which follows from Lazard's results (\cite{Laz}, see also \cite[Corollaire 4.30]{Chr2}) as in \cite[Proposition 4.12]{Ber}.
\end{proof}

\section{Differential rings and differential modules}\label{differential}
In this section, we recall basic terminology of differential rings and differential modules following \cite[\S 5]{pde}. We also prove that the category of finite free differential modules over a differential ring $R$ is abelian if $R$ has a nice property, which is called {\it Property} $(P)$ in this paper.

A {\it differential ring} is a ring $R$ equipped with a derivation $d$ on $R$, which is denoted by $R$ for simplicity. In the rest of this section, let $R$ be a differential ring unless otherwise is mentioned. A {\it differential module} over $R$ is an $R$-module $M$ equipped with an endomorphism $D$ on $M$ as an abelian group, called a {\it differential operator}, satisfying $D(rx)=d(r)x+rD(x)$ for $r\in R,x\in M$, which is denoted by $M$ for simplicity. 
We also denote by $R$ the differential module given by $R$ equipped with the differential operator given by $d$. A differential module $M$ over $R$ is called {\it finite free of rank $m$} (resp. {\it flat}) if so is the underlying $R$-module of $M$. We define the category of differential modules over $R$ by defining a morphism of differential modules $M\to N$ as a morphism of $R$-modules $M\to N$ commuting with differential operators. This category is equipped with the forgetful functor to the category of $R$-modules, which is exact, faithful, and preserves kernels and cokernels. Moreover, some bifunctors and endofunctors of the category of $R$-modules naturally extends to the category of differential modules over $R$; for example, the tensor product ${-}\otimes{-}$, the dual $(-)\spcheck:=\Hom(-,R)$, the $n$-fold tensor product ${-}^{\otimes n}$ for $n\in\mathbb{N}$, and the $n$-th exterior power $\wedge^n -$ for $n\in\mathbb{N}$.

We denote by $R'$ the subring of $R$ given by the kernel of $d$. For a differential module $M$ over $R$, we denote by $H^0(M)$ the $R'$-module given by the kernel of $D:M\to M$.

We define the category of differential rings by defining a morphism $R\to S$ of differential rings as a ring homomorphism $f:R\to S$ commutes with derivations. We can define the pullback functor ${-}\otimes S$ from the category of differential modules over $R$ to that over $S$ so that it is compatible with the usual pullback functor from the category of $R$-modules to that of $S$-modules.

\begin{dfn}
We define {\it Conditions $(S)$} for a differential ring $R$ by
\begin{center}
$(S)$ if $I$ is a principal ideal of $R$ such that $d(I)\subset I$, then $I=(0),(1)$.
\end{center}
We say that a differential ring $R$ has {\it Property $(P)$} if $R$ is an elementary divisor domain satisfying Condition $(S)$. Note that one can find a condition similar to Condition $(S)$ in \cite[Corollaire 6.21]{Chr2}.
\end{dfn}

\begin{remark}\label{remark on rings}
\begin{enumerate}
\item Condition $(S)$ is equivalent to saying that if $r\in R$ satisfies $d(r)\in Rr$, then we have either $r=0$ or $r\in R^{\times}$. Consequently, under Condition $(S)$, $R'$ is a subfield of $R$.
\item Let $f:R\to S$ be a morphism of differential rings. Assume that $f$ is an injection, $R^{\times}\to S^{\times};x\mapsto f(x)$ is a bijection, and $S$ satisfies Condition $(S)$. Then, by Part (i), $R$ satisfies Condition $(S)$.
\item Let $C$ denote the category of differential rings. We can easily see that $C$ is complete and cocomplete. Let $\Lambda$ be a small category, and $F:\Lambda\to C$ a functor. If $F(\lambda)$ for $\lambda\in\Lambda$ satisfies Condition $(S)$, then so do $\lim{F}$ and $\colim{F}$, which follows from Part (i).
\end{enumerate}
\end{remark}

\begin{lemma}
\begin{enumerate}
\item The differential ring $K[\s[t]\s]$ has Property $(P)$.
\item The differential rings $K\langle\alpha/t,t/\beta\rangle,K\langle\alpha/t,t/\beta]\s]_0$, and $K\langle\alpha/t,t/\beta\}$ for $\alpha,\beta$ as in \S \ref{ring} have Property $(P)$.
\end{enumerate}
\end{lemma}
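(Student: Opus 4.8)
The plan is to establish the two ingredients of Property $(P)$ separately. That each ring is an elementary divisor domain requires no new work: for Part (ii) this is exactly Lemma \ref{adequate}, and for Part (i) one observes that $K[\s[t]\s]$ is a complete discrete valuation ring with uniformizer $t$, hence a principal ideal domain, and principal ideal domains are elementary divisor domains by the existence of Smith normal forms. Thus the whole content is Condition $(S)$, which by Remark \ref{remark on rings}(i) amounts to the implication that $d(r)\in Rr$ with $r\neq 0$ forces $r\in R^{\times}$.

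I would prove this in the contrapositive form by comparing orders of vanishing: assuming $r\neq 0$, $r\notin R^{\times}$ and $d(r)=sr$ for some $s\in R$, I would derive a contradiction. For Part (i) this is immediate and elementary. If $k$ denotes the $t$-adic valuation of $r$, then $k\geq 1$ since $r$ is a non-unit, and because $d$ is the usual derivative $d/dt$ and $\mathrm{char}\,K=0$ the valuation of $d(r)$ equals exactly $k-1$; on the other hand the valuation of $sr$ is at least $k$, which contradicts $d(r)=sr$. Hence $k=0$ and $r$ is a unit.

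For Part (ii) the same mechanism is carried out at a genuine zero of $r$ in the annulus. The analytic input I would invoke, from Lazard's theory (\cite{Laz}; see also \cite[\S\S 8, 9]{pde}), is that a nonzero non-unit $r$ admits a zero, i.e.\ there is a point $a$ in a complete extension $L$ of $K$, with $|a|$ in the relevant (half-)closed annulus, such that $r(a)=0$. Taylor expansion at $a$ in the local coordinate $u=t-a$ gives a homomorphism of differential rings $\theta_a\colon R\to L[\s[u]\s]$ intertwining $d=d/dt$ with $d/du$, and the $u$-adic order $\mathrm{ord}_a:=\mathrm{ord}_u\circ\theta_a$ is additive with $\mathrm{ord}_a(s)\geq 0$ for every $s\in R$. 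Since $\mathrm{char}\,K=0$, differentiation drops this order by exactly one whenever $\mathrm{ord}_a(r)\geq 1$, so $\mathrm{ord}_a(d(r))=\mathrm{ord}_a(r)-1$; comparing with $\mathrm{ord}_a(d(r))=\mathrm{ord}_a(s)+\mathrm{ord}_a(r)\geq\mathrm{ord}_a(r)$ yields a contradiction, whence $r\in R^{\times}$.

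The step I expect to be the main obstacle is precisely this analytic input for Part (ii): the existence of a zero of a nonzero non-unit, together with the well-definedness of the evaluation map $r\mapsto r(a)$ and of the Taylor homomorphism $\theta_a$. For the two principal ideal domains $K\langle\alpha/t,t/\beta\rangle$ and $K\langle\alpha/t,t/\beta]\s]_0$ this follows from the Weierstrass-type factorization theory, in which a unit is characterized by having no zeros; for the B\'ezout ring $K\langle\alpha/t,t/\beta\}$, where zeros may accumulate toward the outer boundary, it suffices that a non-unit has at least one zero, which again holds since its unit group coincides with that of $K\langle\alpha/t,t/\beta]\s]_0$ by Lemma \ref{unit lemma}. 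Alternatively, once Condition $(S)$ is known for $K\langle\alpha/t,t/\beta\}$, Remark \ref{remark on rings}(ii) combined with the equality of unit groups in Lemma \ref{unit lemma} transports it immediately to $K\langle\alpha/t,t/\beta]\s]_0$.
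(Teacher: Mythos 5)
Your core mechanism coincides with the paper's: Condition $(S)$ is verified by comparing orders of vanishing under differentiation in characteristic $0$ (the paper's phrase ``by calculating orders of zeroes'' is exactly your Taylor-expansion computation at a zero), and the elementary divisor property is quoted from Lemma \ref{adequate}; your treatment of Part (i) is the standard $t$-adic valuation argument and is fine. The difference is organizational. The paper runs the zero-order argument only once, for the closed annulus ring $K\langle\alpha/t,t/\beta\rangle$, where Weierstrass preparation (\cite[Proposition 8.3.2]{pde}) replaces a generator of a $d$-stable principal ideal by a polynomial; it then transports Condition $(S)$ to $K\langle\alpha/t,t/\beta\}$ by writing that ring as $\cap_{\beta'\in(\alpha,\beta)}K\langle\alpha/t,t/\beta'\rangle$ (first assertion of Lemma \ref{unit lemma}) and using that Condition $(S)$ passes to limits of differential rings (Remark \ref{remark on rings}(iii)), and finally descends to $K\langle\alpha/t,t/\beta]\s]_0$ by Remark \ref{remark on rings}(ii) together with the equality of unit groups (second assertion of Lemma \ref{unit lemma}) --- which is precisely the ``alternative'' you mention at the end. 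Your plan instead runs the zero-order argument directly in each of the three rings, which is legitimate but makes you supply the analytic input (a nonzero non-unit has a zero) three times instead of once, and that input is strictly heavier than what the paper's reductions need.

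The one step that does not hold up as written is the half-open ring. You assert that a nonzero non-unit of $K\langle\alpha/t,t/\beta\}$ has a zero ``since its unit group coincides with that of $K\langle\alpha/t,t/\beta]\s]_0$.'' The unit-group equality only speaks about elements lying in the bounded subring: if $r\in K\langle\alpha/t,t/\beta\}$ is unbounded, it does not belong to $K\langle\alpha/t,t/\beta]\s]_0$ at all, and the equality of unit groups gives no information about its zeros; you must still rule out an unbounded, zero-free $r$. The clean repair uses the \emph{first}, not the second, assertion of Lemma \ref{unit lemma}: if $r$ has no zeros, then on each closed sub-annulus $r$ is a unit of $K\langle\alpha/t,t/\beta'\rangle$ by Weierstrass preparation, hence $1/r\in\cap_{\beta'\in(\alpha,\beta)}K\langle\alpha/t,t/\beta'\rangle=K\langle\alpha/t,t/\beta\}$, so $r$ is a unit after all. (Equivalently, one may cite Lazard's theorem for discretely valued $K$ as a black box, but that is more than the paper needs.) With that correction your argument is complete, and it then essentially reassembles the paper's own chain of reductions.
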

\begin{proof}
Part (i) is obvious. To prove Part (ii), by Lemma \ref{adequate}, we have only to verify Condition $(S)$. In the case of $K\langle\alpha/t,t/\beta\rangle$, let $I$ be a principal ideal of $K\langle\alpha/t,t/\beta\rangle$ such that $d(I)\subset I$. By a consequence of Weierstrass preparation (\cite[Proposition 8.3.2]{pde}), we have either $I=(0)$ or $I=(P)$ for $P\in K[t]$ whose zeroes in an algebraic closure $K^{\mathrm{alg}}$ of $K$ have valuations in $(\alpha,\beta)$. In the latter case, we have $d(P)=QP$ for some $Q\in K\langle\alpha/t,t/\beta\rangle$. By calculating orders of zeroes, we can see that $P$ does not have zeroes in $K^{\mathrm{alg}}$, which implies that $I=(1)$. In the case of $K\langle\alpha/t,t/\beta\}$, the assertion follows from the previous case, Part (iii) of Remark \ref{remark on rings}, and Lemma \ref{unit lemma}. In the case of $K\langle\alpha/t,t/\beta]\s]_0$, the assertion follows from the last case, Part (ii) of Remark \ref{remark on rings}, and Lemma \ref{unit lemma}.
\end{proof}

\begin{lemma}\label{abelian category}
Let $R$ be a differential ring having Property $(P)$. 
\begin{enumerate}
\item For any morphism $\varphi:M\to N$ of finite free differential modules over $R$, the kernel and cokernel of $\varphi$ are again finite free. Consequently, the category of finite free differential modules over $R$ is an abelian subcategory of the category of differential modules over $R$.
\item Let $\varphi$ be as in Part (i). If $\varphi$ is an injection, and the ranks of $M,N$ coincide, then $\varphi$ is an isomorphism.
\item Let $\varphi$ be as in Part (i). If $\varphi$ is an injection, then so is $\wedge^n\varphi:\wedge^n M\to\wedge^nN$ for $n\in\mathbb{N}$.
\end{enumerate}
\end{lemma}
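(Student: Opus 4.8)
The plan is to prove Part (ii) first, then deduce the two halves of Part (i) from it, and finally handle Part (iii) by a base change to the fraction field. Throughout I use that an elementary divisor domain is in particular a B\'ezout domain, so that every finitely generated torsion-free $R$-module is free, and that both the torsion submodule and the saturation of a differential submodule of a finite free differential module are again stable under $D$; the latter is a short computation from $D(rx)=d(r)x+rD(x)$ together with $R$ being a domain.

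For Part (ii), I would pass to top exterior powers. Choosing bases of $M$ and $N$, write $\varphi$ as a square matrix $A$ with $\delta:=\det A$, and let $G_M,G_N$ be the connection matrices of $D$ on $M,N$. The functor $\wedge^m$ sends $\varphi$ to the morphism $\wedge^m M\to\wedge^m N$ of rank-one differential modules given by multiplication by $\delta$, while the operators induced by $D$ on these rank-one modules are multiplication by $\mathrm{tr}(G_M)$ and $\mathrm{tr}(G_N)$. Since $\wedge^m\varphi$ commutes with $D$, comparing the two sides on the chosen generators gives $d(\delta)=(\mathrm{tr}(G_M)-\mathrm{tr}(G_N))\delta\in R\delta$. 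As $\varphi$ is injective and $R$ is a domain, $\delta\neq 0$; hence Condition $(S)$, in the form of Remark \ref{remark on rings}(i), forces $\delta\in R^{\times}$, so $A\in GL_m(R)$ and $\varphi$ is an isomorphism.

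For Part (i), the kernel is immediate: $\ker\varphi$ is a $D$-stable submodule of $M$, and putting the matrix of $\varphi$ into Smith normal form (possible since $R$ is an elementary divisor domain) shows it is free of finite rank. For the cokernel, first note that $\mathrm{im}\,\varphi\cong M/\ker\varphi$ is finitely generated and torsion-free, hence a finite free differential submodule $P$ of $N$. Let $\tilde P\subset N$ be the saturation of $P$; then $N/\tilde P$ is finitely generated and torsion-free, so free, whence the surjection $N\to N/\tilde P$ splits and exhibits $\tilde P$ as a finite free differential module of the same rank as $P$. The inclusion $P\hookrightarrow\tilde P$ is then an injection of finite free differential modules of equal rank, so Part (ii) makes it an isomorphism; thus $P=\tilde P$ and $\coker{\varphi}=N/P=N/\tilde P$ is finite free. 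The consequence that finite free differential modules form an abelian subcategory is then formal. I expect this cokernel step to be the main obstacle: freeness fails unless the torsion is ruled out, and the only available input that can kill it is Condition $(S)$, which is why the argument must be routed through the determinant computation of Part (ii) via saturation.

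For Part (iii), injectivity can be checked after the flat base change $R\to F:=\mathrm{Frac}(R)$. Since localization is flat, $\varphi\otimes F$ is an injection of $F$-vector spaces, so $\wedge^n(\varphi\otimes F)=(\wedge^n\varphi)\otimes F$ is injective. As $\wedge^n M$ is free, hence torsion-free, the map $\wedge^n M\to(\wedge^n M)\otimes F$ is injective; chasing an element $x\in\ker(\wedge^n\varphi)$ through the resulting commuting square shows that its image in $(\wedge^n M)\otimes F$ vanishes, so $x=0$ (for $n>m$ there is nothing to prove, as $\wedge^n M=0$). Here the differential structure plays no role; only that $R$ is a domain and the modules are free is used.
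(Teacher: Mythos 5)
Your proposal is correct, but it takes a genuinely different route from the paper, essentially reversing the logical order of the three parts. The paper proves Part (i) first: it puts the matrix of $\varphi$ into Smith normal form and applies Condition $(S)$ not to a determinant but to the annihilator ideal $I=\{r\in R:\ rx=0\ \text{for all}\ x\in\coker{\varphi}\}$, which is $d$-stable by the Leibniz rule, concluding that the last nonzero elementary divisor is a unit; Part (ii) is then deduced from Part (i) by a rank count over the fraction field, and Part (iii) follows because freeness of $\coker{\varphi}$ makes $\varphi$ a split injection of $R$-modules, so $\wedge^n\varphi$ is split by functoriality. You instead prove Part (ii) directly, applying Condition $(S)$ to $\det A$ via the identity $d(\det A)=(\mathrm{tr}\,G_M-\mathrm{tr}\,G_N)\det A$, then obtain the cokernel half of Part (i) from Part (ii) via saturation, and prove Part (iii) by a self-contained fraction-field argument. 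Each route has its merits: the paper's concentrates all work in one Smith-normal-form computation, making (ii) and (iii) one-line corollaries, while yours isolates weaker hypotheses --- your Part (ii) needs only a domain satisfying Condition $(S)$, and your Part (iii) needs only a domain, with no differential structure or elementary-divisor hypothesis at all. One caveat: your closing remark that the cokernel step \emph{must} be routed through Part (ii) is too strong, since Condition $(S)$ can be applied directly to the annihilator; in fact your saturation detour quietly repairs a small imprecision there, because when $\coker{\varphi}$ has a nonzero free summand its annihilator is $(0)$ rather than the ideal generated by the last nonzero elementary divisor, and the paper's argument should strictly be applied to the ($D$-stable) torsion submodule of the cokernel.
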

\begin{proof}
We prove Part (i). We may assume that $\varphi\neq 0$. Let $m,n$ denote the ranks of $M,N$ respectively, $A\in M_{mn}(R)$ the matrix of $\varphi$ with respect to any chosen basis of $M,N$. Let $d_1,\dots,d_{\min\{m,n\}}$ denote the elementary divisors of $A$. Then, we have isomorphisms $\ker{\varphi}\cong R^{m-k},\coker{\varphi}\cong \oplus_{i=1}^kR/Rd_i\oplus R^{n-k}$ of $R$-modules, where $k\in\{1,\dots,\min\{m,n\}\}$ denotes the unique element satisfying $d_k\neq 0,d_{k+1}=\dots=d_{\min\{m,n\}}=0$. Hence, it suffices to prove that $d_k\in R^{\times}$. Let $I$ denote the ideal of $R$ given by $\{r\in R;\forall x\in \coker{\varphi},rx=0\}$. Then, we have $d(I)\subset I$ by definition, and $I=Rd_k$. Hence, by Property $(P)$, $I=(1)$, which implies the assertion.

We prove Part (ii). Let $F$ denote the fraction field of $R$. By Part (i), it suffices to prove that $\coker{\varphi}\otimes F=0$. The morphism $\varphi\otimes \id_F:M\otimes F\to N\otimes F$ is an isomorphism by assumption. Hence, $\coker{(\varphi\otimes \id_F)}\cong\coker{\varphi}\otimes F=0$.

We prove Part (iii). In the category of $R$-modules, since $\varphi$ is a coretraction, that is, there exists $\psi:N\to M$ such that $\psi\circ\varphi=\id_M$, by Part (i), so is $\wedge^n\varphi$, which implies the assertion.
\end{proof}

Finally, we recall the definitions of the multiset of subsidiary generic radii of convergence and its variation (\cite[Definitions 9.4.4, 9.8.1, Notation 11.3.1, and Remarks 11.3.4, 11.6.5]{pde}). Let $M$ be a non-zero finite free differential module over a differential field $F$ of rank $m$. Then, $M$ is a successive extension of irreducible finite free differential modules $M_1,\dots,M_n$ over $F$. We define the multiset of {\it subsidiary generic radii of convergence} of $M$ by the multiset consisting of $|p|^{1/(p-1)}/|D|_{\mathrm{sp},M_i}$ with multiplicity $\dim_F{M_i}$ for $i=1,\dots,m$, where $|D|_{\mathrm{sp},M_i}$ denotes the spectral radius of $D:M_i\to M_i$ with respect the supremum norm associated to any basis of $M_i$. 

Let $R$ denote any one of the differential rings $K\langle\alpha/t,t\rangle,K\langle\alpha/t,t]\s]_0$, and $K\langle\alpha/t,t\}$ for $\alpha\in [0,1)$, and $I$ denote $[\alpha,1],[\alpha,1]$, and $[\alpha,1)$ respectively. Let $M$ be a non-zero finite free differential module over $R$ of rank $m$. We define the multiset $R_1(M,\rho)\le \dots\le R_m(M,\rho)$ for $\rho\in I$ as follows.  For $\rho\in [\alpha,1)$, or $\rho=1$ when $R=K\langle\alpha/t,t\rangle$, we define the multiset $R_1(M,\rho)\le \dots\le R_m(M,\rho)$ by $R_1(M\otimes K(t)_{\rho})\le\dots\le R_m(M\otimes K(t)_{\rho})$. For $\rho=1$ when $R=K\langle\alpha/t,t]\s]_0$, we define $R_1(M,1)\le \dots\le R_m(M,1)$ by $R_1(M\otimes \mathcal{E})\le\dots\le R_m(M\otimes \mathcal{E})$. Finally, we set $F_i(M,r)=-\log R_1(M,e^{-r})-\dots--\log R_1(M,e^{-r})$ for $i=1,\dots,m$ and $r\in [0,+\infty)$ such that $e^{-r}\in I$.

\section{Proof of Theorem \ref{main result} }\label{proof}
\begin{notation}
Let $M$ be a finite free differential module over $K[\s[t]\s]_0$ of rank $m$. In the rest of this paper, as in Theorem \ref{main result}, we assume that $M\neq 0$, i.e., $m>0$ . We identify $K\{t\}'$ as $K$ via the obvious isomorphism $K\to K\{t\}'$. Let $n$ denote the dimension of the $K$-vector space $H^0(M\otimes K\{t\})$. Let $P$ denote $H^0(M\otimes K\{t\})\otimes K\{t\}$, which is a finite free differential module over $K\{t\}$ of rank $n$ with respect to the differential operator given by $\id_{H^0(M\otimes K\{t\})}\otimes d$. Let $\Phi:P\to M\otimes K\{t\}$ denote the obvious morphism.
\end{notation}

\begin{dfn}\label{D}
We say that $M$ satisfies {\it Condition} $(D)$ if there exists a $3$-tuple $(L,\varphi,\theta)$, where $L$ is a finite free differential module over $K[\s[t]\s]_0$ of rank $n$, $\varphi$ is an injective morphism $L\to M$, and $\theta$ is an isomorphism $L\otimes K\{t\}\to P$, such that the diagram
$$ \xymatrix{L\otimes K\{t\}\ar[r]^{\varphi\otimes \id_{K\{t\}}}\ar[d]^{\theta}&M\otimes K\{t\} \ar[d]^{\id_{M\otimes K\{t\}}} \\ P\ar[r]^(.4){\Phi} & M\otimes K\{t\}} $$
is commutative.
\end{dfn}

\begin{remark}\label{remark on D}
Note that when $n=0,m$, Condition $(D)$ for $M$ holds as the $3$-tuples $(0,0,0),(M,\id_M,\Phi^{-1})$ respectively satisfy the requirements of Condition $(D)$. In particular, Condition $(D)$ for $M$ holds when $m=1$. Moreover, as we will prove in Corollary \ref{two}, Condition $(D)$ for $M$ holds even when $m=2$.
\end{remark}

\begin{example}[{\cite[Appendix 2]{Ohkc}}]\label{example}
Assume that $p\neq 2$. Let $M$ denote the finite free differential module over $K[\s[t]\s]_0$ of rank $2$ with a basis $\{e_1,e_2\}$ such that $D(e_1)=e_2,D(e_2)=-e_1-te_2$. Then, $H^0(M\otimes K\{t\})=K(te_1+e_2)$, in particular, $n=1$. Moreover, $M$ satisfies Condition $(D)$ as the $3$-tuple $(L,\varphi,\theta)$ defined by $L=K[\s[t]\s]_0$, $\varphi:L\to M;1\mapsto te_1+e_2$, and $\theta:L\otimes K\{t\}\to H^0(M\otimes K\{t\})\otimes K\{t\};1\otimes 1\mapsto (te_1+e_2)\otimes 1$ satisfies the requirements of Condition $(D)$.

We prove that $\Phi$ is not a coretraction. We suppose, by way of contradiction, that $\Phi$ is a coretraction. Then, $M\otimes K\{t\}$ has a quotient isomorphic to $K\{t\}$. Hence, $(M\otimes K\{t\})\spcheck$ has a subobject isomorphic to $K\{t\}\spcheck\cong K\{t\}$, which implies that $H^0(M\spcheck\otimes K\{t\})\cong H^0((M\otimes K\{t\})\spcheck)\neq 0$. This contradicts to the fact that $H^0(M\spcheck\otimes K\{t\})=0$ (\cite[p. 1553]{Ohk}).
\end{example}

\begin{proposition}\label{key observation}
Assume that Condition $(D)$ for $M$ holds. Then, Conjecture \ref{Dwork conjecture} for $M$ is true.
\end{proposition}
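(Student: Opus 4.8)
The plan is to deduce Conjecture \ref{Dwork conjecture} for $M$ from Dwork's Theorem \ref{Dwork theorem} applied to the rank $n$ module $L$ furnished by Condition $(D)$, transporting the resulting bound back to $M$ along $\varphi$. The first step is to observe that $P$ is solvable. Since $P=H^0(M\otimes K\{t\})\otimes K\{t\}$ carries the differential operator $\id\otimes d$, a choice of $K$-basis $v_1,\dots,v_n$ of $H^0(M\otimes K\{t\})$ shows that an element $\sum_i v_i\otimes f_i$ is horizontal if and only if $d(f_i)=0$ for all $i$, i.e. $f_i\in K\{t\}'=K$; hence $H^0(P)=H^0(M\otimes K\{t\})$ and $\dim_K H^0(P)=n=\operatorname{rank}P$. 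Transporting this along the isomorphism $\theta\colon L\otimes K\{t\}\xrightarrow{\sim}P$ of differential modules over $K\{t\}$, I get $\dim_K H^0(L\otimes K\{t\})=n=\operatorname{rank}L$, so the finite free differential module $L$ over $K[\s[t]\s]_0$ is solvable. Theorem \ref{Dwork theorem} then gives $\Fil_{n-1}H^0(L\otimes K\{t\})=H^0(L\otimes K\{t\})$.

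Next I would transfer this equality across $\varphi$. Applying $H^0$ to the commutative square of Condition $(D)$ yields $H^0(\varphi\otimes\id_{K\{t\}})=H^0(\Phi)\circ H^0(\theta)$, where $H^0(\theta)$ is an isomorphism and $H^0(\Phi)$ is the canonical identification $H^0(P)=H^0(M\otimes K\{t\})$ sending $v_i\otimes 1\mapsto v_i$, which is an isomorphism by the previous paragraph. Hence $H^0(\varphi\otimes\id_{K\{t\}})\colon H^0(L\otimes K\{t\})\to H^0(M\otimes K\{t\})$ is an isomorphism, so any $w\in H^0(M\otimes K\{t\})$ can be written $w=(\varphi\otimes\id_{K\{t\}})(v)$ with $v\in H^0(L\otimes K\{t\})$, and the solvability bound for $L$ gives $v\in L\otimes K[\s[t]\s]_{n-1}$.

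It then remains to check that $\varphi\otimes\id_{K\{t\}}$ maps $L\otimes K[\s[t]\s]_{n-1}$ into $M\otimes K[\s[t]\s]_{n-1}$. Choosing bases of $L$ and $M$, the morphism $\varphi$ is given by a matrix with entries in $K[\s[t]\s]_0$, so this reduces to the stability $K[\s[t]\s]_0\cdot K[\s[t]\s]_{n-1}\subseteq K[\s[t]\s]_{n-1}$. The latter is the elementary non-archimedean estimate: for $\sum_i b_it^i\in K[\s[t]\s]_0$ and $\sum_i c_it^i\in K[\s[t]\s]_{n-1}$, the $k$-th coefficient of the product satisfies $|\sum_{i+j=k}b_ic_j|\le\max_{i+j=k}|b_i||c_j|\le(\sup_i|b_i|)\max_{j\le k}|c_j|$, whose order of growth in $k$ is at most $n-1$. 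Consequently $w=(\varphi\otimes\id_{K\{t\}})(v)\in M\otimes K[\s[t]\s]_{n-1}$, so $w\in\Fil_{n-1}H^0(M\otimes K\{t\})$; as $w$ was arbitrary, $\Fil_{n-1}H^0(M\otimes K\{t\})=H^0(M\otimes K\{t\})$, which is the assertion.

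The conceptual content of this proposition is precisely the reduction to $L$: once Condition $(D)$ is granted, $P$ is manifestly solvable and Dwork's theorem does the real work, so nothing delicate remains in the proof of this proposition itself. The one point I would treat carefully is the filtration-compatibility of $\varphi\otimes\id_{K\{t\}}$, namely that a morphism defined over the integral ring $K[\s[t]\s]_0$ automatically preserves the growth filtration. This is exactly what allows the solvability bound for $L$, whose rank equals $n$ rather than $m$, to descend to $M$ and produce the sharp exponent $n-1$; the genuinely hard work of the paper is instead the construction of the triple $(L,\varphi,\theta)$ verifying Condition $(D)$ under the hypotheses of Theorem \ref{main result}.
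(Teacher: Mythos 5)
Your proof is correct and follows essentially the same route as the paper: the paper's proof applies Theorem \ref{Dwork theorem} to $L$ (solvable because $H^0(L\otimes K\{t\})\cong H^0(P)\cong H^0(M\otimes K\{t\})$ has dimension $n=\operatorname{rank}L$) and transports the conclusion along the commutative square of Condition $(D)$, exactly as you do. Your write-up merely makes explicit two points the paper leaves implicit --- that $H^0(\Phi)$ is the canonical identification, and that $\varphi$, being defined over $K[\s[t]\s]_0$, preserves the log-growth filtration since $K[\s[t]\s]_0\cdot K[\s[t]\s]_{n-1}\subseteq K[\s[t]\s]_{n-1}$ --- both of which you verify correctly.
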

\begin{proof}
Let notation be as in Definition \ref{D}. Then, we have the commutative diagram
$$ \xymatrix{ \Fil_{n-1}H^0(L\otimes K\{t\})\ar[r]\ar[d]&\Fil_{n-1}H^0(M\otimes K\{t\})\ar[d] \\  H^0(L\otimes K\{t\})\ar[r]&H^0(M\otimes K\{t\}),} $$
where the vertical (resp. horizontal) arrows are the inclusions (resp. induced by $\varphi$). The bottom horizontal arrow is an isomorphism since the $K$-vector space $H^0(L\otimes K\{t\})$ has dimension $n$ by $H^0(L\otimes K\{t\})\cong H^0(P)\cong H^0(M\otimes K\{t\})$. By applying Theorem \ref{Dwork theorem} to $L$, the left vertical arrow is an isomorphism. Hence, the right vertical arrow is an isomorphism.
\end{proof}

We briefly explain the idea of the proof of Part (i) of Theorem \ref{main result}. By Proposition \ref{key observation}, it suffices to verify Condition $(D)$ for $M$ as in Part (i) of Theorem \ref{main result}. To achieve this goal, we need to construct a subobject $L$ of $M$. Unfortunately, such an $L$, if exists, is not necessary indecomposable as we see in Example \ref{example}, which implies that the idea to exploit decomposition results over $K\{t\}$, such as \cite[Theorem 12.5.1]{pde}, may not be effective. Alternatively, we will exploit a decomposition of $M\otimes K\langle\alpha/t,t]\s]_0$ for some $\alpha\in (0,1)$ to achieve our goal.

\begin{thm}[{Theorem \ref{main result} (i)}]\label{key lemma}
Assume that $R_{m-n}(M,1)<1$. Then, Condition $(D)$ for $M$ holds.
\end{thm}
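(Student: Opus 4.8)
The plan is to verify Condition $(D)$ for $M$ and then to conclude by Proposition \ref{key observation}. We may assume $0<n<m$, since the cases $n=0,m$ are covered by Remark \ref{remark on D}. The core idea, following the exterior power trick of \cite{Doc}, is to replace the problem of constructing a rank $n$ subobject of $M$ by that of constructing a rank $1$ subobject of $\wedge^n M$. First I would note that a basis $s_1,\dots,s_n$ of $H^0(M\otimes K\{t\})$ is linearly independent over $K\{t\}$ (their $K$-linear independence already forces this, as $K\{t\}$ is a differential domain with $K\{t\}'=K$), so $\omega:=s_1\wedge\dots\wedge s_n$ is a nonzero element of $H^0(\wedge^n M\otimes K\{t\})$.

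Next I would analyze the subsidiary radii at the boundary. The span of $s_1,\dots,s_n$ is a trivial, hence solvable, rank $n$ submodule of $M\otimes K\{t\}$, so at least $n$ subsidiary radii of $M$ at the boundary equal $1$; combined with the hypothesis $R_{m-n}(M,1)<1$ (which gives $R_1(M,1)\le\dots\le R_{m-n}(M,1)<1$), exactly $n$ of them equal $1$. The resulting gap persists for $\rho$ near $1$, so Kedlaya's decomposition by subsidiary radii (\cite[\S 12]{pde}) yields, for $\alpha\in(0,1)$ close to $1$, a $D$-stable decomposition $M\otimes K\langle\alpha/t,t]\s]_0=V\oplus W$ with $V$ of rank $n$ and all subsidiary radii $1$ (hence solvable), and $W$ of rank $m-n$ and all subsidiary radii $<1$. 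Passing to $\wedge^n$, every summand of $\wedge^n(V\oplus W)$ other than $\wedge^n V$ contains a factor from $W$ and so has all subsidiary radii $<1$, while $\wedge^n V$ is solvable of rank $1$. Thus $\wedge^n M$ has a single subsidiary radius equal to $1$, i.e. $R_{\binom{m}{n}-1}(\wedge^n M,1)<1$, and correspondingly $\dim_K H^0(\wedge^n M\otimes K\{t\})=1$, spanned by $\omega$, with $\ell_{\mathrm{ann}}:=\wedge^n V$ the rank $1$ radius $1$ part over the annulus.

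The crux is then to descend $\ell_{\mathrm{ann}}$ to a rank $1$ subobject over $K[\s[t]\s]_0$. I would set $\ell:=(\wedge^n M)\cap\ell_{\mathrm{ann}}$, the intersection taken inside $\wedge^n M\otimes K\langle\alpha/t,t]\s]_0$. Since $\ell_{\mathrm{ann}}$ is $D$-stable, $\ell$ is a $D$-stable $K[\s[t]\s]_0$-submodule of $\wedge^n M$, realized as the kernel of the morphism of finite free differential modules $\wedge^n M\to(\wedge^n M\otimes K\langle\alpha/t,t]\s]_0)/\ell_{\mathrm{ann}}$, hence finite free by Lemma \ref{abelian category} (i), of rank at most $1$ after base change to the annulus. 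The content is to prove $\ell\neq 0$ and $\ell\otimes K\langle\alpha/t,t]\s]_0=\ell_{\mathrm{ann}}$, that is, to exhibit a bounded generator of the line $\ell_{\mathrm{ann}}$. This is precisely the step at which the method of \cite{Ohkc}, which produces a rank $1$ solvable subobject inside a rank $2$ module, must be generalized to an ambient module of arbitrary rank possessing a single isolated radius $1$ direction together with the prescribed horizontal section $\omega$; \emph{I expect this boundedness argument to be the main obstacle}. Granting it, the identification of horizontal sections gives $\ell\otimes K\{t\}\cong H^0(\wedge^n M\otimes K\{t\})\otimes K\{t\}$ through $\omega$.

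Finally I would recover $L$ from a generator $\eta$ of $\ell=K[\s[t]\s]_0\eta$. Over the annulus $\eta$ generates $\wedge^n V$, so $\eta$ is decomposable there; since the Plücker relations are polynomial identities and $K[\s[t]\s]_0\hookrightarrow K\langle\alpha/t,t]\s]_0$ is injective, $\eta$ is decomposable over $K[\s[t]\s]_0$. I would then define $L:=\ker(M\to\wedge^{n+1}M;\,x\mapsto x\wedge\eta)=\{x\in M;\,x\wedge\eta=0\}$, which is finite free by Lemma \ref{abelian category} (i), of rank $n$ because $L\otimes K\langle\alpha/t,t]\s]_0=V$, and $D$-stable because $D\eta\in K[\s[t]\s]_0\eta$ forces $D(x\wedge\eta)=Dx\wedge\eta$ for $x\in L$, whence $Dx\in L$. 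As each $s_i$ lies in $V$ over the annulus, $s_i\wedge\eta=0$, so $s_1,\dots,s_n\in L\otimes K\{t\}$ and hence $H^0(M\otimes K\{t\})\subset H^0(L\otimes K\{t\})$; together with $\dim_K H^0(L\otimes K\{t\})\le n$ this gives $H^0(L\otimes K\{t\})=H^0(M\otimes K\{t\})$. The inclusion $\varphi:L\hookrightarrow M$ then induces a map $P\to L\otimes K\{t\}$ which is an injection of rank $n$ finite free differential modules, hence an isomorphism $\theta^{-1}$ by Lemma \ref{abelian category} (ii), and the square of Condition $(D)$ commutes by construction. This verifies Condition $(D)$ for $M$, as required.
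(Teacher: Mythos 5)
Your proposal follows the same overall route as the paper --- the exterior power trick to reduce to rank one, a decomposition of $M\otimes K\langle\alpha/t,t]\s]_0$ into a rank $n$ part with radii $1$ and a rank $m-n$ part with radii $<1$, descent of the resulting line to $K[\s[t]\s]_0$, and recovery of $L$ as the kernel of $x\mapsto x\wedge\eta$ --- but the step you explicitly defer (``I expect this boundedness argument to be the main obstacle. Granting it, \dots'') is precisely the mathematical content of the theorem, so this is a genuine gap rather than a rough edge. What has to be proved is that the horizontal line spanned by $\omega=s_1\wedge\dots\wedge s_n$ (defined over $K\{t\}$, a priori unbounded at the boundary) and the line $\wedge^nV$ (defined over the bounded annulus ring only) are restrictions of a single line over $K[\s[t]\s]_0$; nothing in your write-up produces the required generator that is bounded on the whole disc.

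The paper closes this gap with three ingredients, all absent from your proposal. First, a Hom-vanishing claim: fixing one $\rho$ close to $1$, the hypothesis forces $R_{m-n}(Q',\rho)=R_{m-n}(M,\rho)<\rho=R_1(P,\rho)$ (here $Q'$ is your $W$, $Q$ your $V$), whence $\Hom(P\otimes K\langle\alpha/t,t\},Q'\otimes K\langle\alpha/t,t\})=0$; therefore $\Phi':P\otimes K\langle\alpha/t,t\}\to M\otimes K\langle\alpha/t,t\}$ factors through $Q\otimes K\langle\alpha/t,t\}$, and the factoring map is an isomorphism by Lemma \ref{abelian category}. This is what identifies the line spanned by $\omega$ with $\wedge^nQ$ after base change to $K\langle\alpha/t,t\}$ --- an identification you assert implicitly but never establish. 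Second, comparing the two generators $f'$ (coming from the horizontal sections) and $g'$ (coming from the bounded decomposition) of this rank-one module gives $f'=ug'$ with $u\in K\langle\alpha/t,t\}^{\times}$, and Lemma \ref{unit lemma}, $K\langle\alpha/t,t\}^{\times}=K\langle\alpha/t,t]\s]_0^{\times}$, is exactly the boundedness statement you could not supply. Third, the exact sequence $0\to K[\s[t]\s]_0\to K\{t\}\times K\langle\alpha/t,t]\s]_0\to K\langle\alpha/t,t\}\to 0$, tensored with $\wedge^nM$, glues the pair $(f',g')$ (after normalizing $u=1$) into a single horizontal element $e\in\wedge^nM$: this $e$ is the $\eta$ your argument needs. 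Two secondary points: your definition of $\ell$ as the kernel of $\wedge^nM\to(\wedge^nM\otimes K\langle\alpha/t,t]\s]_0)/\ell_{\mathrm{ann}}$ cannot invoke Lemma \ref{abelian category}(i), since the target is not a finite free differential module over $K[\s[t]\s]_0$ (the paper avoids this by constructing $e$ first and only then taking a kernel inside modules over the one ring $K[\s[t]\s]_0$); and the decomposition over the \emph{bounded} ring $K\langle\alpha/t,t]\s]_0$ is not a result of \cite[\S 12]{pde} but of \cite[Theorem 2.3.9, Remark 2.3.11]{KX} (detailed proof in \cite[Theorem 3.5]{Ohkc}) --- having it over the bounded coefficient ring, rather than over $K\langle\alpha/t,t\}$, is essential, since that is where boundedness enters the argument.
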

\begin{proof}
For $\rho\in (0,1)$, we have $R_1(P,\rho)=\dots=R_n(P,\rho)=\rho$ by $P\otimes K(t)_{\rho}\cong H^0(M\otimes K\{t\})\otimes K\{t\}\otimes K(t)_{\rho}\cong K(t)_{\rho}^n$. Hence, we have $R_{m-n+1}(M,\rho)=\dots=R_m(M,\rho)=\rho$ for $\rho\in (0,1)$. By \cite[Theorem 11.3.2, Remark 11.6.5]{pde}, $R_i(M,1)=\lim_{\rho\to 1-0}R_i(M,\rho)=1$ for $i=m-n+1,\dots,m$, i.e., $R_{m-n+1}(M,1)=\dots=R_m(M,1)=1$. By \cite[Theorem 2.3.9, Remark 2.3.11]{KX} (see \cite[Theorem 3.5]{Ohkc} for a detailed proof), there exists a $4$-tuple $(\alpha,Q,Q',\chi)$, where $\alpha\in (0,1)$, $Q,Q'$ are finite free differential modules over $K\langle\alpha/t,t]\s]_0$ of ranks $n,m-n$ respectively such that $R_i(Q',\rho)=R_i(M,\rho)$ for $\rho\in [\alpha,1]$ and $i=1,\dots,m-n$, and $\chi$ is an isomorphism $Q\oplus Q'\to M\otimes K\langle\alpha/t,t]\s]_0$.

We prove the claim that $\Hom(P\otimes K\langle\alpha/t,t\},Q'\otimes K\langle\alpha/t,t\})=0$. Fix $\rho'\in (\max\{R_{m-n}(M,1),\alpha\},1)$. Since $\lim_{\rho\to 1-0}R_{m-n}(M,\rho)=R_{m-n}(M,1)<\rho'$ by \cite[Theorem 11.3.2]{pde}, there exists $\alpha'\in [\rho',1)$ such that $R_{m-n}(M,\rho)<\rho'$ for $\rho\in [\alpha',1)$. Fix $\rho\in (\alpha',1)$. Then, the subsidiary generic radii of convergence of $P\otimes K(t)_{\rho},Q\otimes K(t)_{\rho}$ are disjoint by $R_1(Q',\rho)\le\dots\le R_{m-n}(Q',\rho)=R_{m-n}(M,\rho)<\rho'<\rho=R_1(P,\rho)$. Hence, we have $\Hom(P\otimes K\langle\alpha/t,t\}\otimes K(t)_{\rho},Q'\otimes K\langle\alpha/t,t\}\otimes K(t)_{\rho})\cong \Hom(P\otimes K(t)_{\rho},Q'\otimes K(t)_{\rho})=0$. Since $P,Q'$ are finite free, the obvious morphism $\Hom(P\otimes K\langle\alpha/t,t\},Q'\otimes K\langle\alpha/t,t\})\to \Hom(P\otimes K\langle\alpha/t,t\}\otimes K(t)_{\rho},Q'\otimes K\langle\alpha/t,t\}\otimes K(t)_{\rho})$ is an injection, which implies the claim.

Let $\Psi$ denote the morphism $Q\to M\otimes K\langle\alpha/t,t]\s]_0$ obtained by the composition of the obvious morphism $Q\to Q\oplus Q'$ followed by $\chi$. Let $\Phi'$ (resp. $\Psi'$) denote the injective morphism $P\otimes K\langle\alpha/t,t\}\to M\otimes K\langle\alpha/t,t\}$ (resp. $Q\otimes K\langle\alpha/t,t\}\to M\otimes K\langle\alpha/t,t\}$) induced by $\Phi$ (resp. $\Psi$). By the claim, there exists a unique morphism $\Gamma:P\otimes K\langle\alpha/t,t\}\to Q\otimes K\langle\alpha/t,t\}$ making the square in a diagram
$$ \xymatrix{&P\otimes K\langle\alpha/t,t\}\ar[r]^{\Phi'}\ar[d]^{\Gamma} &M\otimes K\langle\alpha/t,t\}\ar[d]^{\id_{M\otimes K\langle\alpha/t,t\}}}&& \\ 0\ar[r]&Q\otimes K\langle\alpha/t,t\}\ar[r]^{\Psi'} & M\otimes K\langle\alpha/t,t\}\ar[r]&Q'\otimes K\langle\alpha/t,t\}\ar[r] &0}  $$
commutative, where the bottom row is exact. By Lemma \ref{abelian category}, $\Gamma$ is an isomorphism. Applying the functor $\wedge^n{-}$ to the square in the diagram above, we obtain the commutative diagram
$$ \xymatrix{\wedge^n (P\otimes K\langle\alpha/t,t\})\ar[r]^{\wedge^n\Phi'}\ar[d]_{\cong}^{\wedge^n\Gamma} &\wedge^n (M\otimes K\langle\alpha/t,t\})\ar[d]^{\id_{\wedge^n (M\otimes K\langle\alpha/t,t\})}} \\ \wedge^n (Q\otimes K\langle\alpha/t,t\})\ar[r]^{\wedge^n\Psi'} & \wedge^n (M\otimes K\langle\alpha/t,t\}),}  $$
where the horizontal arrows are injections by Lemma \ref{abelian category}. Let $\{f_1,\dots,f_n\}$ (resp. $\{g_1,\dots,g_n\}$) be a basis of $P$ (resp. $Q$) such that $D(f_1)=\dots=D(f_n)=0$. Let $f',g'$ denote $\wedge^n\Phi'(f_1\otimes 1\wedge\dots\wedge f_n\otimes 1),\wedge^n\Psi'(g_1\otimes 1\wedge\dots\wedge g_n\otimes 1)\in \wedge^n (M\otimes K\langle\alpha/t,t\})$ respectively. By the last diagram, the image of $\wedge^n\Phi'$ coincides with that of $\wedge^n\Psi'$, which is a finite free differential module over $K\langle\alpha/t,t\}$ of rank $1$ generated by $f'$ as well as by $g'$. Hence, there exists $u\in K\langle\alpha/t,t\}^{\times}=K\langle\alpha/t,t]\s]_0^{\times}$ such that $f'=ug'$. After replacing $g_1$ by $ug_1$, we may assume that $u=1$.

We have an exact sequence
$$\xymatrix{0\ar[r]&K[\s[t]\s]_0\ar[r]&K\{t\}\times K\langle\alpha/t,t]\s]_0\ar[r]&K\langle\alpha/t,t\}\ar[r] &0, }  $$
where the second (resp. third) arrow is induced by (resp. the difference of) the inclusions $K[\s[t]\s]_0\to K\{t\},K[\s[t]\s]_0\to K\langle\alpha/t,t]\s]_0$ (resp. $K\{t\}\to K\langle\alpha/t,t\},K\langle\alpha/t,t]\s]_0\to K\langle\alpha/t,t\}$). By applying the functor $(\wedge^n M)\otimes -$ to the exact sequence, we obtain an exact sequence
$$\xymatrix{0\ar[r]&\wedge^n M\ar[r]^(.2){\tau\times \tau'}&\wedge^n (M\otimes K\{t\})\times \wedge^n (M\otimes K\langle\alpha/t,t]\s]_0)\ar[r]^(.63){\sigma}&\wedge^n(M\otimes K\langle\alpha/t,t\})\ar[r] &0, } $$
where $\tau:\wedge^n M\to \wedge^n (M\otimes K\{t\}),\tau':\wedge^n M\to \wedge^n (M\otimes K\langle\alpha/t,t]\s]_0)$ denote the obvious morphisms. Then, $\sigma(\Phi(f_1)\wedge\dots\wedge \Phi(f_n),\Psi(g_1)\wedge\dots\wedge \Psi(g_n))=f'-g'=0$. Hence, there exists a unique element $e\in \wedge^n M$ such that $\tau\times\tau'(e)=(\Phi(f_1)\wedge\dots\wedge \Phi(f_n),\Psi(g_1)\wedge\dots\wedge \Psi(g_n))$. Then, $D(e)=0$ since $\tau$ is an injection, and $\tau(D(e))=D(\tau(e))=D(\Phi(f_1)\wedge\dots\wedge \Phi(f_n))=\Phi(D(f_1))\wedge\dots\wedge \Phi(f_n)+\dots+\Phi(f_1)\wedge\dots\wedge \Phi(D(f_n))=0$. Hence, the map $M\to \wedge^{n+1}M;x\mapsto x\wedge e$, which is denoted by $\delta$, is a morphism of differential modules over $K[\s[t]\s]_0$. Let $L$ denote the kernel of $\delta$, and $\varphi$ the canonical injection $L\to M$ so that we have the exact sequence
$$ \xymatrix{0\ar[r]&L\ar[r]^{\varphi}&M\ar[r]^(.4){\delta}&\wedge^{n+1}M }$$
in the category of finite free differential modules over $K[\s[t]\s]_0$.

Let $\Delta$ denote the map $M\otimes K\{t\}\to \wedge^{n+1}(M\otimes K\{t\});x\mapsto x\wedge \Phi(f_1)\wedge\dots\wedge \Phi(f_n)$. Then, similarly as before, $\Delta$ is a morphism of differential modules over $K\{t\}$. Since there exists a basis of $M\otimes K\{t\}$ of the form $\{\Phi(f_1),\dots,\Phi(f_n),\dots\}$ by Part (i) of Lemma \ref{abelian category}, the kernel of $\Delta$ coincides with $K\{t\}\Phi(f_1)+\dots+K\{t\}\Phi(f_n)=\Phi(P)$. Hence, we have the commutative diagram with exact rows
$$ \xymatrix{ 0\ar[r]&L\otimes K\{t\}\ar[r]^{\varphi\otimes \id_{K\{t\}}}&M\otimes K\{t\}\ar[r]^(.4){\delta\otimes \id_{K\{t\}}}\ar[d]^{\id_{M\otimes K\{t\}}}&(\wedge^{n+1}M)\otimes K\{t\}\ar[d]_{\cong}\\ 0\ar[r]&P\ar[r]^(.4){\Phi}&M\otimes K\{t\}\ar[r]^(.4){\Delta}& \wedge^{n+1}(M\otimes K\{t\}), }  $$
where the right vertical arrows is an obvious isomorphism. Therefore, there exists a unique morphism $\theta:L\otimes K\{t\}\to P$  such that $\Phi\circ\theta=\varphi\otimes\id_{K\{t\}}$. By construction, the $3$-tuple $(L,\varphi,\theta)$ satisfies the requirements for Condition $(D)$.
\end{proof}

\begin{remark}
The idea of the construction of $e$ (resp, $L,\theta$) is similar to that in the proof of \cite[Lemma 12.2.4]{pde} (resp. \cite[Lemma 3.6.2]{Doc}), which is a result on descents of modules (resp. $\varphi$-modules).
\end{remark}

\begin{lemma}[{Dwork transfer theorem over $K[\s[t]\s]_0$}]\label{DTT}
The condition $R_1(M,1)=1$ is equivalent to $n=m$ (note that we assume that $K$ is discretely valued).
\end{lemma}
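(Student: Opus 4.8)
The plan is to prove the two implications separately, using as common input the computation already carried out in the proof of Theorem \ref{key lemma} that the top $n$ subsidiary radii of $M$ are maximal. First I would record this input precisely. The morphism $\Phi\colon P\to M\otimes K\{t\}$ is injective, since the elements of $H^0(M\otimes K\{t\})$ are $K$-linearly independent horizontal sections, hence $K\{t\}$-linearly independent. As $P\otimes K(t)_\rho\cong K(t)_\rho^n$ is a trivial differential module for $\rho\in(0,1)$, base changing $\Phi$ along $K\{t\}\to K(t)_\rho$ exhibits a trivial submodule of rank $n$ inside $M\otimes K(t)_\rho$, whence $R_{m-n+1}(M,\rho)=\dots=R_m(M,\rho)=\rho$ for all $\rho\in(0,1)$. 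Letting $\rho\to 1-0$ and invoking \cite[Theorem 11.3.2, Remark 11.6.5]{pde} gives $R_{m-n+1}(M,1)=\dots=R_m(M,1)=1$.

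Granting this, the implication $n=m\Rightarrow R_1(M,1)=1$ is immediate: for $n=m$ the lower index $m-n+1$ equals $1$, so the above chain of equalities already reads $R_1(M,1)=\dots=R_m(M,1)=1$. For the reverse implication I would assume $R_1(M,1)=1$ and note that $R_1(M,1)=\lim_{\rho\to1-0}R_1(M,\rho)$ by \cite[Theorem 11.3.2]{pde}, while $R_1(M,\rho)\le R_i(M,\rho)\le\rho$ for every $i$ and every $\rho\in(0,1)$. Squeezing then forces $\lim_{\rho\to1-0}R_i(M,\rho)=1$ for all $i$; in other words, $M$ has full generic radius of convergence at the boundary $\rho=1$. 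By Dwork's transfer theorem a full-rank space of horizontal sections of $M$ converges on the whole open unit disc, so that $M\otimes K\{t\}$ is a trivial differential module of rank $m$; equivalently $\dim_K H^0(M\otimes K\{t\})=m$, i.e.\ $n=m$.

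The only step requiring more than bookkeeping with subsidiary radii is this last one: the passage from ``$M$ has full generic radius at the boundary'' to ``$M$ admits $m$ horizontal sections converging on the open unit disc''. This is precisely the content of Dwork's transfer theorem, and it is the place where genuine analytic input is needed in order to guarantee that the dimension of the solution space attains exactly $m$; it is also where the standing assumption that $K$ is discretely valued enters. Everything else is formal: the two directions of bounding the radii follow from the injectivity of $\Phi$, the triviality of $P$, and the continuity of $\rho\mapsto R_i(M,\rho)$ recalled in \cite{pde}.
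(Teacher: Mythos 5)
Your forward direction ($n=m\Rightarrow R_1(M,1)=1$) is correct and is essentially the paper's argument: triviality of $M\otimes K(t)_\rho$ (or, in your phrasing, the rank-$n$ trivial submodule coming from $\Phi$) gives $R_i(M,\rho)=\rho$ for $\rho\in(0,1)$, and then continuity at the boundary from \cite[Theorem 11.3.2, Remark 11.6.5]{pde} finishes. The problem is the converse, and it is exactly at the step you yourself flag as ``the only step requiring more than bookkeeping.'' After your squeeze you know only that $\lim_{\rho\to 1-0}R_i(M,\rho)=1$ for all $i$; you do \emph{not} know that $R_1(M\otimes K(t)_\rho)=\rho$ for any fixed $\rho<1$. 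Dwork's transfer theorem \cite[Theorem 9.6.1]{pde} requires full generic radius at a \emph{fixed} radius $\rho$ and then produces solutions on the disc $|t|<\rho$; there is no version in the cited literature that takes ``solvability in the limit at the boundary'' for a module over $K[\s[t]\s]_0$ (where $R_1(M,1)$ is defined via $\mathcal{E}$, not via $K(t)_1$) and outputs a full solution space on the open unit disc. Invoking ``Dwork's transfer theorem'' for that passage is, in effect, citing the lemma you are trying to prove --- which is why the paper remarks that it gives a proof ``as a lack of appropriate references.'' That the gap is genuine and not cosmetic is shown by the annulus case: over the (bounded) Robba ring there are solvable differential modules with no horizontal sections at all (e.g.\ modules with Frobenius structure such as the Bessel module), so no purely formal squeeze on radii can yield the conclusion; some input special to the disc must be used.

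The paper supplies that input in two steps that your proposal omits. First, from $R_1(M,1)=1$ one gets $F_m(M,0)=0$, and since $r\mapsto F_m(M,r)$ is convex and piecewise affine \cite[Theorem 11.3.2]{pde} and equals $mr$ for $r$ large by the $p$-adic Cauchy theorem \cite[Proposition 9.3.3]{pde} (this is the disc-specific anchor at $\rho\to 0$, unavailable on an annulus), convexity forces $F_m(M,r)=mr$ identically; combined with $R_i(M,\rho)\le\rho$ this yields $R_i(M,\rho)=\rho$ for \emph{every} fixed $\rho\in(0,1)$ and every $i$. Only now does the transfer theorem apply, at each fixed $\rho$, giving $\dim_K H^0(M\otimes K\{t/\rho\})=m$. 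Second, one still needs to pass from full solution spaces on each disc $|t|<\rho$ to a full solution space on $|t|<1$: the paper does this by viewing all the spaces $H^0(M\otimes K\{t/\rho\})$ inside $H^0(M\otimes K[\s[t]\s])$, which has dimension at most $m$, so they all coincide, and then $H^0(M\otimes K\{t\})=H^0(M\otimes\cap_{\rho}K\{t/\rho\})=\cap_\rho H^0(M\otimes K\{t/\rho\})$ has dimension $m$. Your proposal folds both of these steps into the single unsupported sentence ``a full-rank space of horizontal sections of $M$ converges on the whole open unit disc,'' so as written the backward implication is not proven.
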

\begin{proof}
This is well-known to experts, however, we give a proof as a lack of appropriate references.

Assume that $n=m$. Then, $M\otimes K\{t\}\cong K\{t\}^m$ by Lemma \ref{abelian category}. Hence, we have $M\otimes K(t)_{\rho}\cong M\otimes K\{t\}\otimes K(t)_{\rho}\cong K(t)_{\rho}^m$ for $\rho\in (0,1)$, which implies that $R_1(M,\rho)=\rho$ for $\rho\in (0,1)$. Hence, $R_1(M,1)=\lim_{\rho\to 1-0}R_1(M,\rho)=1$ by \cite[Theorem 11.3.2]{pde}. To prove the converse, assume that $R_1(M,1)=1$. Then, $R_1(M,1)=\dots=R_m(M,1)=1$. Hence, $F_m(M,0)=0$. Recall that the function $[0,+\infty)\to\mathbb{R};r\mapsto F_m(M,r)$ is a convex, piecewise affine function with finitely many slopes (\cite[Theorem 11.3.2]{pde}); there exists $\alpha\in (0,1)$ such that $F_m(M,r)=mr$ for $r\in [\alpha,+\infty)$ by $p$-adic Cauchy theorem (\cite[Proposition 9.3.3]{pde}). Hence, the convexity implies that $F_m(M,r)=mr$ for $r\in [0,+\infty)$. Hence, $R_i(M,\rho)=\rho$ for $\rho\in (0,1)$ and $i=1,\dots,m$ by $R_1(M,\rho)\le\dots\le R_m(M,\rho)\le \rho$. By applying Dwork's transfer theorem (\cite[Theorem 9.6.1]{pde}) to $M\otimes K\langle t/\rho\rangle$ for all $\rho\in (0,1)$, the $K$-vector space $H^0(M\otimes K\{t/\rho\})$ for $\rho\in (0,1)$ has dimension $m$. We regard $H^0(M\otimes K\{t/\rho\})$ for $\rho\in (0,1)$ as a subspace of $H^0(M\otimes K[\s[t]\s])$. Then, $H^0(M\otimes K\{t/\rho\})=H^0(M\otimes K[\s[t]\s])$ for $\rho\in (0,1)$. Hence, $H^0(M\otimes K[\s[t]\s])=\cap_{\rho\in (0,1)}H^0(M\otimes K\{t/\rho\})=H^0(M\otimes \cap_{\rho\in (0,1)} K\{t/\rho\})=H^0(M\otimes K\{t\})$, which implies the assertion.
\end{proof}

\begin{thm}[{Theorem \ref{main result} (ii)}]\label{main result (ii)}
Assume that $n=m-1$. Then, Condition $(D)$ for $M$ holds. Consequently, Conjecture \ref{Dwork conjecture} for $M$ is true.
\end{thm}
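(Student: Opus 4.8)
The plan is to deduce this statement immediately from Part (i), namely Theorem \ref{key lemma}, once its hypothesis is verified in the present situation. Since $n=m-1$, we have $m-n=1$, so the quantity that must be controlled is $R_{m-n}(M,1)=R_1(M,1)$, and the hypothesis of Theorem \ref{key lemma} becomes simply $R_1(M,1)<1$. Thus the entire task reduces to establishing this single strict inequality, after which Condition $(D)$ for $M$ follows from Theorem \ref{key lemma}, and Conjecture \ref{Dwork conjecture} for $M$ then follows from Proposition \ref{key observation}.

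To establish the inequality I would first record the elementary bound $R_1(M,1)\le 1$. Indeed, for $\rho\in (0,1)$ the normalization of subsidiary radii gives $R_1(M,\rho)\le\dots\le R_m(M,\rho)\le\rho$, and passing to the limit $\rho\to 1-0$ (which computes $R_1(M,1)$ by \cite[Theorem 11.3.2]{pde}) yields $R_1(M,1)\le 1$.

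The decisive input is the Dwork transfer theorem over $K[\s[t]\s]_0$, Lemma \ref{DTT}, which asserts that $R_1(M,1)=1$ holds if and only if $n=m$. Since $M\neq 0$ gives $m>0$, the assumption $n=m-1$ forces $n<m$, hence $n\neq m$, so Lemma \ref{DTT} yields $R_1(M,1)\neq 1$. Combining this with the bound $R_1(M,1)\le 1$ from the previous step produces the desired strict inequality $R_{m-n}(M,1)=R_1(M,1)<1$.

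With the hypothesis of Theorem \ref{key lemma} verified, Condition $(D)$ for $M$ holds, and Proposition \ref{key observation} then gives Conjecture \ref{Dwork conjecture} for $M$, completing the argument. I do not expect a genuine obstacle at this stage: all the substantive work has been front-loaded into the exterior-power descent construction of Theorem \ref{key lemma} and into the equivalence of Lemma \ref{DTT}, and the present statement is essentially their formal combination. The only point requiring a moment's care is the passage from $R_1(M,1)\neq 1$ to the \emph{strict} inequality $R_1(M,1)<1$, which is exactly what the a priori bound $R_1(M,1)\le 1$ supplies.
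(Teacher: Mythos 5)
Your proof is correct and is essentially the paper's own argument: since $n=m-1<m$, Lemma \ref{DTT} rules out $R_1(M,1)=1$, and together with the a priori bound $R_1(M,1)\le 1$ this verifies the hypothesis $R_{m-n}(M,1)=R_1(M,1)<1$ of Theorem \ref{key lemma}, whence Condition $(D)$ and then Conjecture \ref{Dwork conjecture} via Proposition \ref{key observation}. The only difference is presentational: you make explicit the elementary bound $R_1(M,1)\le 1$ (via $R_i(M,\rho)\le\rho$ and passage to the limit), which the paper leaves implicit in the phrase ``By the assumption and Lemma \ref{DTT}, we have $R_1(M,1)<1$.''
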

\begin{proof}
By the assumption and Lemma \ref{DTT}, we have $R_1(M,1)<1$. Hence, the assertion follows from Theorem \ref{key lemma}.
\end{proof}

\begin{corollary}\label{two}
When $m\le 2$, Condition $(D)$ for $M$ holds.
\end{corollary}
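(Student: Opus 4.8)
The plan is to reduce the statement to a short case analysis on the pair $(m,n)$ and to observe that, since the standing assumption $M\neq 0$ forces $m\in\{1,2\}$, every resulting subcase has already been settled earlier in this section. Concretely, I would split according to the value of $n$, treating the boundary values $n=0$ and $n=m$ uniformly and isolating the single genuinely intermediate subcase.

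First I would dispose of the cases $n=0$ and $n=m$ by invoking Remark \ref{remark on D}, where the $3$-tuples $(0,0,0)$ and $(M,\id_M,\Phi^{-1})$ are exhibited as witnesses of Condition $(D)$. When $m=1$ we have $n\le m=1$, hence $n\in\{0,1\}=\{0,m\}$, so both possibilities are covered and nothing further is required. When $m=2$ the admissible values are $n\in\{0,1,2\}$, and the extreme values $n=0$ and $n=2=m$ again fall under Remark \ref{remark on D}.

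The only remaining subcase is therefore $m=2$ and $n=1$. The key bookkeeping observation is that here $n=m-1$, so that Theorem \ref{main result (ii)} applies verbatim and yields Condition $(D)$ for $M$. Combining this with the previous paragraph establishes Condition $(D)$ in all subcases with $m\le 2$, which is the assertion of the corollary.

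I do not expect a genuine obstacle here: the entire content is absorbed by results already proved, namely Remark \ref{remark on D} and Theorem \ref{main result (ii)}. The only point deserving attention is the elementary but essential observation that the unique non-boundary subcase $(m,n)=(2,1)$ satisfies the hypothesis $n=m-1$ of Theorem \ref{main result (ii)}; once this is noted, the proof is immediate.
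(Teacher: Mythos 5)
Your proof is correct and follows essentially the same route as the paper: reduce via Remark \ref{remark on D} to the single case $(m,n)=(2,1)$, then observe that $n=m-1$ so Theorem \ref{main result (ii)} applies. The extra case bookkeeping you spell out is exactly what the paper's one-line proof implicitly contains.
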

\begin{proof}
By Remark \ref{remark on D}, we may assume that $(m,n)=(2,1)$, in which case the assertion is a special case of Theorem \ref{main result (ii)}.
\end{proof}

\end{document}